\begin{document}
\newtheorem{thm}{Theorem}[section]
\newtheorem*{thm*}{Theorem}%[section]
\newtheorem{lem}[thm]{Lemma}
\newtheorem{prop}[thm]{Proposition}
\newtheorem{cor}[thm]{Corollary}
\newtheorem*{conj}{Conjecture}%[section]
\newtheorem{proj}[thm]{Project}%[section]
\newtheorem{question}[thm]{Question}
\newtheorem{rem}{Remark}[section]

\theoremstyle{definition}
\newtheorem*{defn}{Definition}
\newtheorem*{remark}{Remark}
\newtheorem{exercise}{Exercise}
\newtheorem*{exercise*}{Exercise}

\numberwithin{equation}{section}

\newcommand{\rad}{\operatorname{rad}}

\newcommand{\Z}{{\mathbb Z}} %cph changed from \mathbf
\newcommand{\Q}{{\mathbb Q}}
\newcommand{\R}{{\mathbb R}}
\newcommand{\C}{{\mathbb C}}
\newcommand{\N}{{\mathbb N}}
\newcommand{\FF}{{\mathbb F}}
\newcommand{\fq}{\mathbb{F}_q}
\newcommand{\rmk}[1]{\footnote{{\bf Comment:} #1}}

\renewcommand{\mod}{\;\operatorname{mod}}
\newcommand{\ord}{\operatorname{ord}}
\newcommand{\TT}{\mathbb{T}}
\renewcommand{\i}{{\mathrm{i}}}
\renewcommand{\d}{{\mathrm{d}}}
\renewcommand{\^}{\widehat}
\newcommand{\HH}{\mathbb H}
\newcommand{\Vol}{\operatorname{vol}}
\newcommand{\area}{\operatorname{area}}
\newcommand{\tr}{\operatorname{tr}}
\newcommand{\norm}{\mathcal N} % norm =(\frac{ n+\sqrt{n^2-4}} 2)^2
\newcommand{\intinf}{\int_{-\infty}^\infty}
\newcommand{\ave}[1]{\left\langle#1\right\rangle} %  average
\newcommand{\Var}{\operatorname{Var}}
\newcommand{\Prob}{\operatorname{Prob}}
\newcommand{\sym}{\operatorname{Sym}}
\newcommand{\disc}{\operatorname{disc}}
\newcommand{\CA}{{\mathcal C}_A}
\newcommand{\cond}{\operatorname{cond}} % conductor
\newcommand{\lcm}{\operatorname{lcm}}
\newcommand{\Kl}{\operatorname{Kl}} %Kloosterman sum
\newcommand{\leg}[2]{\left( \frac{#1}{#2} \right)}  % Legendre symbol
\newcommand{\Li}{\operatorname{Li}}

\newcommand{\sumstar}{\sideset \and^{*} \to \sum}

\newcommand{\LL}{\mathcal L} %L-function of u
\newcommand{\sumf}{\sum^\flat}
\newcommand{\Hgev}{\mathcal H_{2g+2,q}}
\newcommand{\USp}{\operatorname{USp}}
\newcommand{\conv}{*}
\newcommand{\dist} {\operatorname{dist}}
\newcommand{\CF}{c_0} % Fejer constant
\newcommand{\kerp}{\mathcal K}

\newcommand{\Cov}{\operatorname{cov}}
\newcommand{\Sym}{\operatorname{Sym}}

\newcommand{\Ht}{\operatorname{Ht}}

\newcommand{\E}{\operatorname{\mathbb E}} % expectation
\newcommand{\sign}{\operatorname{sign}} %sign
\newcommand{\meas}{\operatorname{meas}} %measure
\newcommand{\length}{\operatorname{length}} %length

\newcommand{\divid}{d} % the divisor function

\newcommand{\GL}{\operatorname{GL}}
\newcommand{\SL}{\operatorname{SL}}
\newcommand{\re}{\operatorname{Re}}
\newcommand{\im}{\operatorname{Im}}
\newcommand{\res}{\operatorname{Res}}
 \newcommand{\eigen}{\Lambda} %eigenvalue on rectangle
\newcommand{\tens}{\mathbf t} %Tension
\newcommand{\diam}{\operatorname{diam}}
\newcommand{\fixme}[1]{\footnote{Fixme: #1}}
\newcommand{\new}[1]{{ #1}}

\title[On the Robin spectrum for the equilateral triangle]{ On the Robin spectrum for the equilateral triangle}
\author{Ze\'ev Rudnick and Igor Wigman}

\address{School of Mathematical Sciences, Tel Aviv University, Tel Aviv 69978, Israel} \email{rudnick@tauex.tau.ac.il}
\address{Department of Mathematics, King's College London, UK}\email{igor.wigman@kcl.ac.uk}

\begin{abstract}The equilateral triangle is one of the few planar domains where the Dirichlet and Neumann eigenvalue problems  were explicitly determined, by Lam\'e  in 1833, despite not admitting separation of variables. In this paper, we study the Robin spectrum of the equilateral triangle, which was determined by McCartin in 2004 in terms of a system of transcendental coupled secular equations.

We give uniform upper bounds for the Robin-Neumann gaps, showing that they are bounded by their limiting mean value, which is hence an almost sure bound. The spectrum admits a systematic double multiplicity, and after removing it we study the gaps in the resulting desymmetrized spectrum.
We show  a spectral gap property, that there are arbitrarily large gaps, and also arbitrarily small ones, moreover that the nearest neighbour spacing distribution of the desymmetrized spectrum is a delta function at the origin. We show that for sufficiently small Robin parameter, the desymmetrized spectrum is simple.
\end{abstract}

\thanks{This research was supported by the European Research Council (ERC) under the European Union's Horizon 2020 research and innovation programme (Grant agreement No. 786758) and by the Israel Science Foundation (grant No. 1881/20). }

\dedicatory{Dedicated to Michael Berry for his 80'th birthday}

\date{\today}
\maketitle

\section{Introduction}
The equilateral triangle is one of the few planar domains where the Dirichlet and Neumann eigenvalue problems are explicitly solved, despite not admitting separation of variables. The solution was found by Lam\'e in 1833 \cite{Lame1833},   who also investigated
the Robin\footnote{The term ``Robin boundary condition'' came much later, see \cite{AG} for a historical discussion.} boundary value problem: Denoting by $T$ an equilateral triangle and by $\partial T$ its boundary as in Figure~\ref{fig:equilateral2},  the Robin problem is to solve
 %in the study of the cooling of a right prism with  a base consisting of an equilateral triangle:
% the  Robin eigenvalue problem for an equilateral triangle $T$:
\[
\Delta f+ \lambda f=0\quad { \rm on}\; T, \quad \frac{\partial f}{\partial n} +\sigma f=0\;  {\rm on}\; \partial T
\]
where $\frac{\partial}{\partial n}$ is the derivative in the outward pointing normal direction, and $\sigma>0$ is the Robin parameter (which we take to be constant).

%\begin{figure}[ht]
%\begin{center}
%\includegraphics[height=40mm]{equilateralfig.png}
%\caption{Equilateral triangle with inscribed radius $r$.}
%\label{fig:equilateral}
%\end{center}
%\end{figure}

 \begin{figure}[ht]
\begin{center}
\includegraphics[height=60mm]{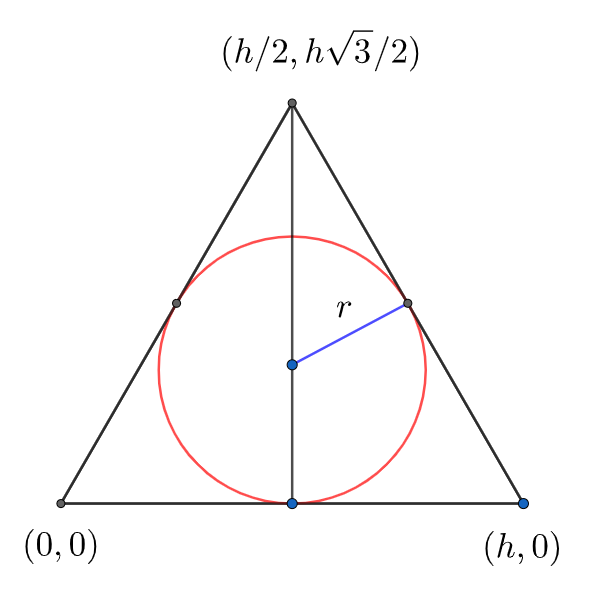}
\caption{An equilateral triangle  of side length $h$. The inscribed circle has radius $r=h/(2\sqrt{3})$.}
\label{fig:equilateral2}
\end{center}
\end{figure}

%According to McCartin \cite{McCartin2004}
Lam\'e only determined the Robin eigenfunctions possessing $120^{\circ}$  rotational symmetry, and it is only in 2004 that
   McCartin \cite{McCartin2004, McCartinbook} completely determined the eigenproblem, showing that all of the eigenfunctions are trigonometric polynomials\footnote{The only polygonal domains where all Dirichlet or Neumann eigenfunctions are trigonometric are rectangles, and the equilateral, hemi-equilateral and right isosceles triangles \cite{McCartin2008}, see \cite{RBNTV} for a higher dimensional version}, and that the eigenvalues are determined by a system of transcendental coupled secular equations as follows:
    Define auxiliary parameters
%\begin{equation*}%\label{range of L,M,N}
$L\in (-\pi/2,0]$,  $M,N\in [0,\pi/2)$,
%\end{equation*}
which are required to satisfy the coupled system of equations
\begin{equation}\label{coupled system intro}
\begin{split}
\Big(2L-M-N-(m+n)\pi \Big)\tan L &= 3r \sigma
\\
\Big(2M-N-L+m\pi \Big)\tan M &= 3r \sigma
\\
\Big(2N-L-M+n\pi \Big) \tan N &= 3r \sigma .
\end{split}
\end{equation}
The corresponding  Robin eigenvalues are % parameterized by pairs of integers $m,n\geq 0$, in the form
\begin{equation}\label{def of eigen}
\eigen_{m,n} (\sigma)%= \frac{2\pi^2}{27 r^2}(\mu^2+\nu^2+\lambda^2)
=\frac{4\pi^2}{27 r^2}(\mu^2+\nu^2+\mu\nu)
\end{equation}
where
\[
 %\lambda=\frac{2L-M-N}{\pi}-m-n, \;
 \mu=\frac{2M-N-L}{\pi}+m,\quad  \nu = \frac{2N-L-M}{\pi}+n .
\]
Note that there is a systematic multiplicity of order $2$ coming from the symmetry $\eigen_{m,n} = \eigen_{n,m}$,
\new{and we will refer to $\{\eigen_{m,n} (\sigma)\}_{0\le m\le n} $ as the {\em desymmetrized} Robin spectrum}.

For $\sigma\geq 0$, let $\lambda_{n}^{\sigma}$ denote the $n$-th eigenvalue of the Robin Laplacian on the equilateral triangle,
arranged by size and repeated with multiplicities (the case $\sigma=0$ are the Neumann eigenvalues). We will study a number of aspects of the Robin spectrum of the equilateral triangle.

%\marginpar{insert ref to RW rectangle~\cite{RW rectangle} }
%\marginpar{Do we want to add reference to Sieber-\textcolor{red}{Smilansky} and Berry-Dennis?}

In the first part of the paper, we study the Robin-Neumann gaps
\[
d_n(\sigma):=\lambda_{n}^{\sigma}-\lambda_{n}^{0}  ,
\]
see  Figure~\ref{equilateral gaps}.
As is the case for any bounded piecewise smooth planar domain, the RN gaps have a
limiting mean value \cite{RWY}, which  equals
\[
\bar d :=\lim_{N\to \infty} \frac 1N\sum_{n=1}^N d_n(\sigma) = \frac{2\length \partial T}{\area T} \sigma = \frac 4r \sigma ,
\]
where $r$ is the radius of the inscribed circle.
%We will show  a uniform sharp upper bound on the RN gaps for the equilateral triangle:
 Remarkably, for the equilateral triangle, the limiting mean value is also an upper bound:

\begin{thm} \label{thm: diffs for triangle} % avoid duplication with statement in section
We have  $d_n(\sigma)<\bar d$  for all $n$.
\end{thm}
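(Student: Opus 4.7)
The plan is to derive an explicit closed-form expression for the eigenvalue increment $\Lambda_{m,n}(\sigma) - \Lambda_{m,n}(0)$ in the desymmetrized $(m,n)$-labelling, bound it by $\bar d$ using the elementary inequality $x\cot x < 1$ for $x\in(-\pi/2,\pi/2)\setminus\{0\}$, and then transfer the estimate to the sorted labelling via the standard fact that pointwise-dominated families have pointwise-dominated order statistics.

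First I would introduce the symmetric auxiliary triple
\[
A := -(\mu+\nu)\pi, \qquad B := \mu\pi, \qquad C := \nu\pi,
\]
which automatically satisfies the linear constraint $A+B+C=0$, and which by \eqref{coupled system intro} satisfies the symmetric relations $A\tan L = B\tan M = C\tan N = 3r\sigma$. In these variables the eigenvalue formula \eqref{def of eigen} takes the symmetric shape $\Lambda_{m,n}(\sigma) = \tfrac{2}{27 r^2}(A^2+B^2+C^2)$, and the Neumann case ($\sigma = 0$, $L=M=N=0$) corresponds to $A_0 = -(m+n)\pi$, $B_0 = m\pi$, $C_0 = n\pi$. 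Writing $A = A_0 + (2L-M-N)$ and cyclically, I would expand
\[
\Lambda_{m,n}(\sigma)-\Lambda_{m,n}(0) = \tfrac{2}{27 r^2}\sum (A^2 - A_0^2) = \tfrac{2}{27 r^2}\sum (A+A_0)\,\Delta A
\]
and exploit the constraints $A+B+C = 0 = A_0+B_0+C_0$ to collapse the cross terms to $3(AL+BM+CN) + 3(A_0L+B_0M+C_0N)$. Substituting $AL = 3r\sigma\,L\cot L$ from the coupled system and using $A_0L = AL - \Delta A\cdot L$, the sum telescopes to the clean identity
\[
\Lambda_{m,n}(\sigma)-\Lambda_{m,n}(0) = \frac{4\sigma}{3r}\bigl[L\cot L + M\cot M + N\cot N\bigr] - \frac{2}{9r^2}\bigl[(L-M)^2 + (M-N)^2 + (L-N)^2\bigr].
\]

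With this identity in hand the theorem is immediate: each of $L,M,N$ lies in $(-\pi/2,\pi/2)$, and $x\cot x < 1$ for $x\neq 0$ in this interval. For $\sigma>0$ the coupled system forces $L,M,N$ all nonzero (else one of $A\tan L, B\tan M, C\tan N$ vanishes while the other side equals $3r\sigma>0$), hence $L\cot L+M\cot M+N\cot N<3$, yielding $\Lambda_{m,n}(\sigma)-\Lambda_{m,n}(0) < 4\sigma/r = \bar d$. The passage from this pointwise strict bound on the continuously labelled eigenvalue curves to the sorted gap $d_n(\sigma)=\lambda_n^\sigma-\lambda_n^0 < \bar d$ is standard: if $a_k<b_k$ for every $k$, then the order statistics satisfy $a_{(n)}<b_{(n)}$ for every $n$. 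The principal technical obstacle is the algebraic bookkeeping needed to unearth the displayed identity; what makes it all collapse so neatly is the affine constraint $A+B+C=0$ combined with the $S_3$-symmetry of \eqref{coupled system intro}, which force the cross terms to telescope and leave the $x\cot x$ factors and a manifestly nonnegative squared-differences remainder.
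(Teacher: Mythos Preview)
Your argument is correct and, at its core, identical to the paper's: both proofs rewrite $\Lambda_{m,n}(\sigma)-\Lambda_{m,n}(0)$ using the constraint $\mu+\nu+\lambda=0$, discard a manifestly nonnegative sum of squares, and then invoke $|x|<|\tan x|$ on $(-\pi/2,\pi/2)\setminus\{0\}$ (your $x\cot x<1$ is the same inequality). The paper works with the variables $\mu_j, M_j$, bounds $\mu_j^2-m_j^2\le 2\mu_j(\mu_j-m_j)$ (which is precisely your discarding of the squared-differences term), collapses $\sum \mu_j(2M_j-M_i-M_k)$ to $3\sum \mu_jM_j$ via the constraint, and then uses $|\mu_jM_j|<|\mu_j\tan M_j|=3r\sigma/\pi$. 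Your version packages these same steps into the explicit closed-form identity
\[
\Lambda_{m,n}(\sigma)-\Lambda_{m,n}(0)=\frac{4\sigma}{3r}\bigl(L\cot L+M\cot M+N\cot N\bigr)-\frac{2}{9r^2}\sum(\cdot-\cdot)^2,
\]
which is a nice sharpening: it records exactly how far the gap falls short of $\bar d$, rather than just bounding it. The passage to the ordered eigenvalues via order statistics is likewise the same counting argument the paper uses.
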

 %Since the limiting mean value $\bar d$ is also an upper bound it is optimal.
% \textcolor{red}{In Appendix~\ref{app:rectangle} we will show that this also holds for rectangles, improving \cite{RW rectangle}. }
% \marginpar{do we want the rectangle?}
For most domains, we do not expect a uniform upper bound, e.g. we expect that for the disk there are arbitrarily large RN gaps,
but cannot prove this for any example of a planar domain, see \cite{RWhemisphere} for the hemisphere.

As a consequence of Theorem~\ref{thm: diffs for triangle}, we show:
\begin{cor}\label{cor: Ae convergence of gaps}
\new{The RN gaps tend to the mean value along a density one sequence of eigenvalues.}
\end{cor}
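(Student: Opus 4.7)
The plan is to deduce Corollary~\ref{cor: Ae convergence of gaps} from Theorem~\ref{thm: diffs for triangle} combined with the known convergence of the Cesàro mean to $\bar d$, via the standard Koopman--von Neumann lemma: if a bounded nonnegative sequence has Cesàro mean tending to zero, then it tends to zero along a subsequence of density one.

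More concretely, set $e_n := \bar d - d_n(\sigma)$. By Theorem~\ref{thm: diffs for triangle} we have $e_n > 0$ for every $n$, and by the result of \cite{RWY} recalled above, the Cesàro means satisfy
\[
\frac{1}{N}\sum_{n=1}^N e_n = \bar d - \frac{1}{N}\sum_{n=1}^N d_n(\sigma) \longrightarrow 0, \quad N\to\infty.
\]
Hence the nonnegative sequence $(e_n)$ converges to $0$ in Cesàro mean.

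Next, for every fixed $\varepsilon>0$ Markov's inequality gives
\[
\#\{n\le N : e_n >\varepsilon\} \le \frac{1}{\varepsilon}\sum_{n=1}^N e_n = o(N),
\]
so the set $A_\varepsilon := \{n : e_n >\varepsilon\}$ has upper density zero. The final step is the usual diagonal extraction: choose $\varepsilon_k\downarrow 0$ and integers $N_k\uparrow\infty$ with $\#(A_{\varepsilon_k}\cap [1,N]) \le N/k$ for all $N\ge N_k$; then the set
\[
\mathcal E := \bigcup_{k\ge 1} \bigl( A_{\varepsilon_k}\cap (N_k, N_{k+1}]\bigr)
\]
has density $0$, while for $n\notin \mathcal E$ with $n>N_k$ we have $e_n\le \varepsilon_k$, so that $d_n(\sigma)\to \bar d$ along the density one sequence $\N\setminus\mathcal E$.

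There is no real obstacle here: the upper bound of Theorem~\ref{thm: diffs for triangle} is exactly what is needed to convert Cesàro convergence of $d_n(\sigma)$ into pointwise convergence along a density one subsequence. The only mildly delicate point is the diagonal construction of the exceptional set $\mathcal E$, but this is a textbook application of the Koopman--von Neumann lemma and requires nothing beyond the positivity $e_n\ge 0$ supplied by the theorem.
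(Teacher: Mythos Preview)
Your proof is correct and follows essentially the same approach as the paper: both arguments exploit the positivity of $e_n=\bar d-d_n(\sigma)$ (from Theorem~\ref{thm: diffs for triangle}) together with $\frac1N\sum e_n\to 0$ to conclude that $\{n:e_n>\varepsilon\}$ has density zero for each $\varepsilon>0$. The paper reaches this via a short contradiction argument, whereas you phrase it as Markov's inequality and then make the Koopman--von~Neumann diagonal extraction explicit; the underlying mechanism is identical.
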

  %A different task is to control the number of exceptions.

%\begin{question}
%\textcolor{red}{ Prove uniform lower bounds on RN gaps; we know lower bounds for any domains with smooth boundary (which are star-shaped). We don't know of any domain where there is no uniform lower bound.}
%\end{question}

%For both we have uniform upper bounds for the gaps.
 \begin{figure}[ht]
       \includegraphics[height=60mm]{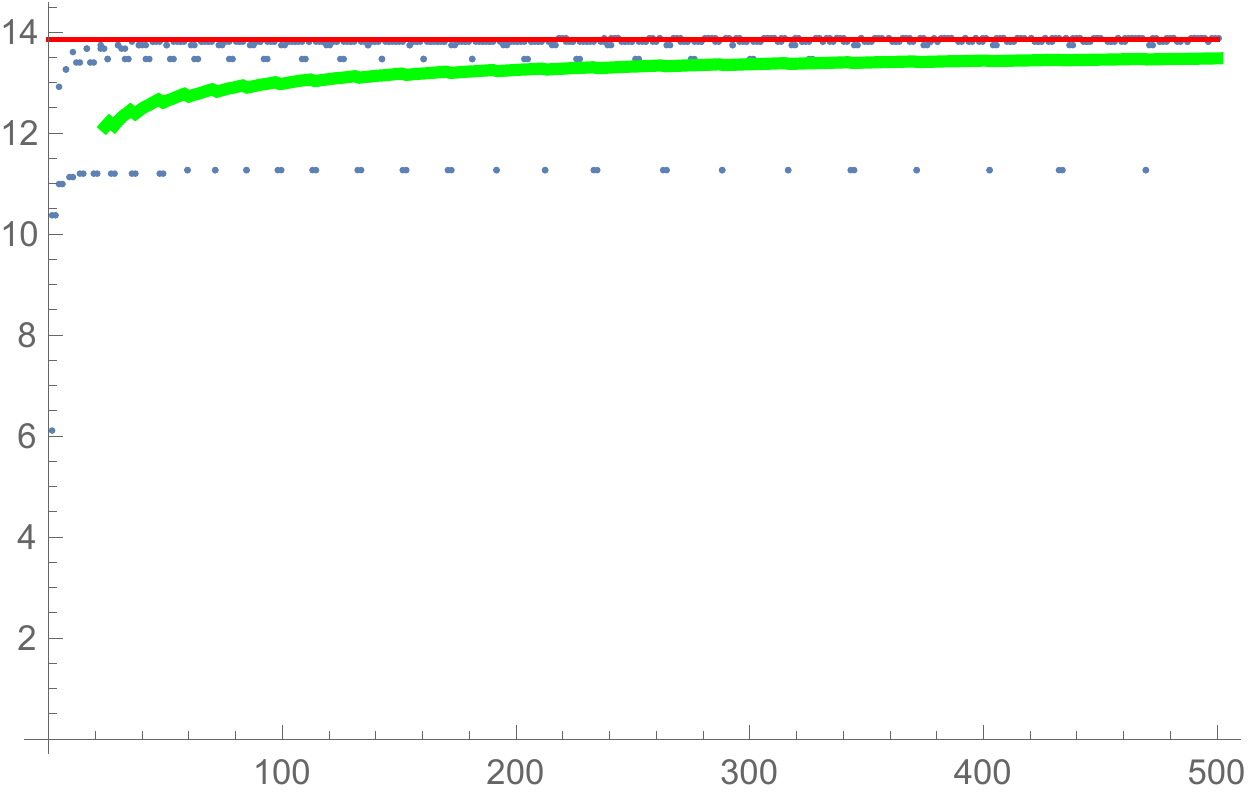}
    \caption{The first 500 RN gaps  for the equilateral triangle  %\eqref{fig:equilateral}
    with side length $1$, with $\sigma=1$. The solid (red) line is the limiting mean value $2\length(\partial T)/\area(T)=8\sqrt{3}=13.8564\dots$. Note that all the gaps are below the limiting mean value, as is proved in Theorem~\ref{thm: diffs for triangle}.
   }
   \label{equilateral gaps}
\end{figure}

We use the results on the  RN gaps to deduce information on the asymptotics of the Robin spectrum of the equilateral triangle by comparing it to the Neumann spectrum:

\begin{cor}
\label{cor:arb large gaps}
\new{For fixed $\sigma>0$, there are arbitrarily large gaps in the Robin spectrum $\{\lambda_n^\sigma\}$.}
\end{cor}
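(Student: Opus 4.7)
The plan is to transport arbitrarily large gaps from the Neumann spectrum to the Robin spectrum using the uniform bound of Theorem~\ref{thm: diffs for triangle}. Writing $\lambda_n^\sigma=\lambda_n^0+d_n(\sigma)$ with $d_n(\sigma)\in[0,\bar d)$, one has
\[
\lambda_{n+1}^\sigma-\lambda_n^\sigma \;>\; (\lambda_{n+1}^0-\lambda_n^0)-\bar d,
\]
so any Neumann gap exceeding $K+\bar d$ produces a Robin gap exceeding $K$, and it suffices to show $\sup_n(\lambda_{n+1}^0-\lambda_n^0)=\infty$. The min--max principle guarantees that the indexing of the spectrum is preserved under variation of $\sigma$, so no bookkeeping issue arises.

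For the Neumann step I would appeal to Lam\'e's explicit formula: $\sigma=0$ admits the branch $L=M=N=0$ in \eqref{coupled system intro}, so by \eqref{def of eigen} the distinct Neumann eigenvalues are $\tfrac{4\pi^2}{27r^2}\cdot\mathcal L$, where $\mathcal L=\{m^2+mn+n^2:m,n\geq 0\}$ is the set of norms of Eisenstein integers $\Z[e^{2\pi\i/3}]$. A classical theorem of Landau gives $\#(\mathcal L\cap[1,X])\sim cX/\sqrt{\log X}$. Consequently the mean spacing between consecutive distinct Neumann eigenvalues up to height $X$ is of order $\sqrt{\log X}$ and tends to infinity; a pigeonhole argument then produces individual gaps of at least this order inside every sufficiently tall window, whence $\sup_n(\lambda_{n+1}^0-\lambda_n^0)=\infty$. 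Combined with the first paragraph this yields the corollary.

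I do not anticipate a serious obstacle: Landau's theorem is a classical input and the transfer from Neumann to Robin is already quantitative. The one point worth noting is that large spacings among distinct Neumann eigenvalues agree with large consecutive differences in the multiplicity-counted sequence, since repeated eigenvalues only contribute spacings of $0$; nothing is lost when passing between the two orderings.
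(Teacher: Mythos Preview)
Your proof is correct and follows the same overall strategy as the paper: use the uniform bound $0\le d_n(\sigma)<\bar d$ from Theorem~\ref{thm: diffs for triangle} to reduce to showing that the Neumann spectrum has arbitrarily large gaps, then invoke an arithmetic fact about the set $\mathcal L=\{m^2+mn+n^2:m,n\ge 0\}$. The difference lies only in that arithmetic step. The paper gives an explicit, elementary construction: using that a prime $p\equiv 2\bmod 3$ can divide an element of $\mathcal L$ only to an even power, one applies the Chinese Remainder Theorem to the first $K$ such primes to produce $K$ consecutive integers none of which lie in $\mathcal L$. Your argument instead invokes the Landau-type asymptotic $\#(\mathcal L\cap[1,X])\sim cX/\sqrt{\log X}$ (attributed in the paper to James~\cite{James}) and a pigeonhole on the average gap. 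Both are valid; the paper's route is more self-contained and constructive, while yours is softer but immediately gives the quantitative lower bound $g_\sigma(x)\gg\sqrt{\log x}$, which the paper obtains separately by citing Richards~\cite{Richards}. Your aside about the min--max principle is harmless but unnecessary: the inequality $\lambda_{n+1}^\sigma-\lambda_n^\sigma>(\lambda_{n+1}^0-\lambda_n^0)-\bar d$ follows directly from $d_{n+1}(\sigma)\ge 0$ and $d_n(\sigma)<\bar d$, with no appeal to how the ordering varies in $\sigma$.
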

  This is sometimes called the ``spectral gap property'' and is useful in a variety of applications. An example is to show the existence of inertial manifolds in dissipative reaction-diffusion equations \cite{FST, MPS}
$$\frac{\partial u}{\partial t}=\nu \Delta u +g(u),$$
for $u$ on a domain satisfying suitable boundary conditions, with $g$ a suitable nonlinear function, and where $\nu>0$ is a parameter, see \cite{Kwean} for the case of the equilateral triangle. There are very few instances of planar domains where the existence of arbitrarily large gaps in the spectrum (with any boundary condition) is known. The question is open even  for the Dirichlet spectrum of the rectangle having the golden mean as its aspect ratio.

We can also show that there are arbitrarily small nonzero gaps in the spectrum.
In fact, we have a stronger result:

\begin{thm}
\label{thm:nearest neigh delta}
The distribution of nearest neighbour gaps in the desymmetrized spectrum is a delta function at the origin, i.e. for any fixed $x>0$,
\[
\lim_{N\to \infty}\frac 1N \#\{ n\leq N: \lambda_{n+1}^\sigma-\lambda_n^\sigma \leq x \} = 1
\]
 \end{thm}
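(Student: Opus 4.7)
The plan is to deduce Theorem~\ref{thm:nearest neigh delta} from two density-one statements: (a) that the desymmetrized Neumann spectrum has a density-one set of coincidences $\lambda^0_{n+1}=\lambda^0_n$, and (b) the desymmetrized form of Corollary~\ref{cor: Ae convergence of gaps}, which says the RN shift $d_n(\sigma)$ tends to $\bar d$ along a density-one set of indices. Together these give the algebraic identity
$$
\lambda_{n+1}^\sigma-\lambda_n^\sigma=(\lambda_{n+1}^0-\lambda_n^0)+\bigl(d_{n+1}(\sigma)-d_n(\sigma)\bigr)=0+o(1)
$$
on a density-one set of $n$, which is sharper than what is needed.

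For (a), recall that the desymmetrized Neumann eigenvalues are $\Lambda_{m,n}(0)=\frac{4\pi^2}{27 r^2}(m^2+mn+n^2)$ for $0\le m\le n$. The count with multiplicity of such eigenvalues below $X$ is $\sim c X$ by elementary lattice-point counting in the fundamental sector, whereas by the classical Landau--Bernays theorem applied to the primitive positive-definite binary quadratic form $m^2+mn+n^2$ (the norm form of the Eisenstein ring $\Z[\omega]$, $\omega=e^{2\pi\i/3}$), the count of \emph{distinct} values below $X$ is only $\asymp X/\sqrt{\log X}$. Subtracting, the indices $n$ with $\lambda^0_{n+1}=\lambda^0_n$ have density one in $\N$.

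Intersecting this coincidence set with the density-one set $\mathcal A$ where $d_n(\sigma)\to \bar d$ and with its shift $\mathcal A-1$ (all three sets having density one, so their intersection does too), we find a density-one set of $n$ on which the displayed identity yields $|\lambda_{n+1}^\sigma-\lambda_n^\sigma|<x$ eventually, establishing the theorem. The main technical point to secure is that Corollary~\ref{cor: Ae convergence of gaps} is valid for the desymmetrized ordering used in Theorem~\ref{thm:nearest neigh delta}; this should reduce to the uniform asymptotic $\Lambda_{m,n}(\sigma)=\Lambda_{m,n}(0)+\bar d+O(1/\min(m,n))$ obtained by solving the secular system~\eqref{coupled system intro} in the regime $L,M,N\to 0$ (where $M\sim 3r\sigma/(m\pi)$, $N\sim 3r\sigma/(n\pi)$, $L\sim -3r\sigma/((m+n)\pi)$), since the pairs with $\min(m,n)$ bounded contribute density zero to the desymmetrized spectrum.
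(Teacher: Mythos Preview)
Your proposal is correct and follows essentially the same strategy as the paper's proof: both combine (a) the Landau--Bernays/James count showing that distinct values of $m^2+mn+n^2$ up to $X$ number only $O(X/\sqrt{\log X})$, so that $\lambda^0_{n+1}=\lambda^0_n$ on a density-one set of $n$, with (b) Corollary~\ref{cor: Ae convergence of gaps} to control $d_{n+1}(\sigma)-d_n(\sigma)$ on a density-one set; the paper packages step (b) as a comparison of cumulative distribution functions $\tilde P_\sigma(t,N)$ and $\tilde P_0(t,N)$, while you intersect the density-one sets directly, but the content is the same.

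Your flagged technical point---that Theorem~\ref{thm: diffs for triangle} and Corollary~\ref{cor: Ae convergence of gaps} must be transported to the desymmetrized ordering---is well taken, and the paper glosses over it. However, the resolution is simpler than the asymptotic expansion you sketch: the proof of Theorem~\ref{thm: diffs for triangle} first establishes $0<\Lambda_{m,n}(\sigma)-\Lambda_{m,n}(0)<\bar d$ for every index pair, and the passage to ordered eigenvalues (the interval-counting argument reproduced there) works verbatim for any index set, in particular for $\{0\le m\le n\}$. Since removing the systematic doubling does not change the Ces\`aro mean of the gaps, the limiting mean value remains $\bar d$, and Corollary~\ref{cor: Ae convergence of gaps} follows for the desymmetrized spectrum by the identical argument.
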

We emphasize that in this paper, $\sigma$ is fixed; it is of great interest to study the spacings when $\sigma$ grows with the eigenvalue, see the discussion by  Sieber,  Primack,  Smilansky,  Ussishkin and   Schanz
 \cite{SS},  and by Berry and Dennis \cite{BD}.

In the second part of the paper  we examine spectral multiplicities (or ``modal degeneracies'').  For the Dirichlet or Neumann spectrum of the equilateral triangle, there are large multiplicities of arithmetic origin; the same holds for the hemi-equilateral (half of an equilateral triangle) and right isosceles  triangles, but there are other triangles with accidental degeneracies, see the paper of  Berry and Wilkinson  \cite{BerryWilkinson} for an exploration of these ``diabolical points''. Hillairet and Judge \cite{HJ} showed that for  {\em almost all}\footnote{In the sense of Lebesgue measure on the space of triangles of fixed area, which can be parameterized by   triples of angles which sum to $\pi$.} triangles the Dirichlet spectrum is simple, but their method does not give a single explicit example.
For the Robin spectrum on the equilateral triangle, there is a systematic doubling due to the symmetry $(m,n)\mapsto (n,m)$ in \eqref{def of eigen}.
McCartin  \cite[\S 8]{McCartin2004} observed that there are additional degeneracies for $\sigma\gg 1$.  We will show that for small $\sigma>0$, there are no other degeneracies:
\begin{thm}\label{thm:simplicity}
There is some $\sigma_0>0$ so that there are no multiplicities in the Robin spectrum for $0<\sigma<\sigma_0$ except for the systematic doubling.
\end{thm}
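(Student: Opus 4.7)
The plan is to Puiseux-expand McCartin's coupled system \eqref{coupled system intro} around $\sigma=0$ and compare the resulting expansions of the Robin eigenvalue branches $\eigen_{m,n}(\sigma)$. First, by Theorem~\ref{thm: diffs for triangle}, every RN gap satisfies $0\le d_n(\sigma)<\bar d=4\sigma/r$. Since the Neumann spectrum lies in $\tfrac{4\pi^2}{27r^2}\mathbb Z$, with distinct values separated by at least $\tfrac{4\pi^2}{27r^2}$, any coincidence $\eigen_{m,n}(\sigma)=\eigen_{m',n'}(\sigma)$ for $\sigma<\pi^2/(27r)$ must occur between branches starting at a common Neumann level $E_0$; that is, $Q(m,n)=Q(m',n')$ with $Q(m,n):=m^2+mn+n^2$. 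Write $S(E_0):=\{(m,n):0\le m\le n,\ Q(m,n)=E_0\}$.

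Next, I solve \eqref{coupled system intro} perturbatively. When $m,n\ge 1$, the branch $(L,M,N)$ is analytic near $\sigma=0$ with leading terms $L\sim-\tfrac{3r}{(m+n)\pi}\sigma$, $M\sim\tfrac{3r}{m\pi}\sigma$, $N\sim\tfrac{3r}{n\pi}\sigma$. When $m=0$ and $n\ge 1$ the second equation degenerates and forces a half-integer Puiseux expansion $M=\sqrt{3r\sigma/2}+O(\sigma^{3/2})$. Substituting into \eqref{def of eigen}, the linear $\sigma$-coefficient of $\eigen_{m,n}(\sigma)$ equals $4/r=\bar d/\sigma$ for every $(m,n)$ with $m,n\ge 1$ (a clean identity reflecting that the Hellmann--Feynman derivative already agrees with the RN-gap mean on every such Neumann eigenfunction), while for $m=0$ it equals $10/(3r)$. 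Since $S(E_0)$ contains at most one pair with $m=0$, all coincidences of mixed type are ruled out already at first order.

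The substantive case is $(m,n),(m',n')\in S(E_0)$ with $m,m'\ge 1$. Pushing the expansion to order $\sigma^2$---computing the second-order coefficients $A,B,C$ of $L,M,N$ from \eqref{coupled system intro} and substituting into \eqref{def of eigen}---a sequence of symbolic cancellations produces the compact closed form
\[
\eigen_{m,n}(\sigma)=\tfrac{4\pi^2}{27r^2}E_0+\tfrac{4}{r}\sigma-\frac{4E_0^{\,2}}{\pi^2(mn)^2(m+n)^2}\sigma^2+O(\sigma^3).
\]
Setting $s=m+n,\,p=mn$, so that $p=s^2-E_0$ on $S(E_0)$, the $\sigma^2$ coefficient depends only on $sp=s(s^2-E_0)$, whose derivative in $s$ is $3s^2-E_0>0$ on the admissible interval $s\in[\sqrt{E_0},\sqrt{4E_0/3}]$. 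Hence $s\mapsto sp$ is strictly monotone, and the $\sigma^2$ coefficient is injective in $s$; since pairs in $S(E_0)$ with a common $s$ are determined up to the involution $(m,n)\leftrightarrow(n,m)$ by $Q=E_0$, non-symmetric branches in $S(E_0)$ separate at quadratic order.

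To pass from a level-dependent bound to the uniform $\sigma_0$ in the statement, one controls the $O(\sigma^3)$ remainder uniformly in $(m,n)$---its coefficients are rational in $1/m,\,1/n,\,1/(m+n)$ and bounded for $m,n\ge 1$---and combines this with the integer lower bound $|s_1p_1-s_2p_2|=|s_1-s_2|\cdot|s_1^2+s_1s_2+s_2^2-E_0|\ge 2E_0$ on the $\sigma^2$ coefficient gap. The main obstacle is the symbolic second-order computation: the simplification of the $\sigma^2$ coefficient to $-4E_0^2/(\pi^2(mn)^2(m+n)^2)$ is not manifest from \eqref{coupled system intro} and requires careful bookkeeping of the second-order terms $A,B,C$, but once in hand the injectivity question collapses to the elementary monotonicity of $s\mapsto s(s^2-E_0)$.
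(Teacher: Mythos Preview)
Your overall architecture matches the paper's: separate clusters via Theorem~\ref{thm: diffs for triangle}, distinguish the $m=0$ branches at first order, then separate the $m\ge 1$ branches within a cluster at second order. Your closed form for the $\sigma^2$ coefficient is correct (it is $-\tfrac{4}{\pi^2}F_R(m,n)$ with $F_R(m,n)=E_0^{\,2}/(mn(m+n))^2$), and your monotonicity argument via $s\mapsto s(s^2-E_0)$ is a clean alternative to the paper's case analysis in Proposition~\ref{prop:2nd term asymp} for showing that the $\sigma^2$ coefficients are pairwise distinct on $S(E_0)$.

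The genuine gap is in the passage to a uniform $\sigma_0$. The quantity $|s_1p_1-s_2p_2|\ge 2E_0$ is \emph{not} the gap between the $\sigma^2$ coefficients; that gap is
\[
\frac{4E_0^{\,2}}{\pi^2}\left|\frac{1}{(s_1p_1)^{2}}-\frac{1}{(s_2p_2)^{2}}\right|,
\]
and since $s_ip_i$ can be as large as $cE_0^{3/2}$ (when $m_i\asymp n_i\asymp \sqrt{E_0}$), this gap can be as small as $cE_0^{-3/2}$, tending to zero as $E_0\to\infty$. Combining a decaying $\sigma^2$-gap with a merely \emph{bounded} $O(\sigma^3)$ remainder forces $\sigma_0\lesssim E_0^{-3/2}$, which is level-dependent, not uniform. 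What rescues the argument in the paper is a sharper third-order computation: the $\sigma^3$ coefficient is $\tfrac{4r}{\pi^2}F_R(m,n)$ up to $O(1/m^4)$, so the second- and third-order terms package together as $-\tfrac{4}{\pi^2}F_R(m,n)\,\sigma^2(1-r\sigma)$ and the \emph{true} remainder is $O(\sigma^3/m^4)$ (Proposition~\ref{prop:asymp exp Lambda}(1), via Lemmas~\ref{new lemma 1.5}--\ref{lem:2term exp m,n>=1}). This decay rate then matches the quantitative lower bound $F_R(m,n)-F_R(m',n')\gg 1/m^4$ of Proposition~\ref{prop:2nd term asymp}; the factors of $1/m^4$ cancel in the comparison, leaving $\sigma^2(1-r\sigma)\gg\sigma^3$, hence a uniform $\sigma_0$. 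Your own remark that the remainder coefficients are ``rational in $1/m,1/n,1/(m+n)$'' is exactly the structure needed, but you must exploit their \emph{decay}, not just their boundedness, and pair it with a lower bound on the $\sigma^2$-gap carrying the same decay rate.
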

A similar result holds for the square; however, for rectangles whose squared aspect ratio is irrational, there are multiplicities for arbitrarily small $\sigma>0 $ \cite{RW rectangle}, showing the special arithmetic nature of the result.

For the proof of Theorem~\ref{thm:simplicity}, we partition the spectrum into clusters
$$\mathcal C_R(\sigma) = \{\Lambda_{m,n}(\sigma): m,n\geq 0, m^2+mn + n^2=R^2\},
$$
consisting of all Robin eigenvalues given by \eqref{def of eigen}, that, for $\sigma=0$, correspond
to the common Neumann eigenvalue satisfying $$\frac{4\pi^2}{27r^2}(m^2+mn + n^2)= \Lambda_{m,n}(0)=\frac{4\pi^2}{27r^2} R^2$$
with some $m,n\ge 0$ integers, for the given $R>0$. At $\sigma=0$, these clusters are well separated, as the Neumann eigenvalues are
multiples by $\frac{4\pi^2}{27r^2}$ of integers. As $\sigma$ varies, different clusters remain separated for small $\sigma$ due to our upper bound on the Robin-Neumann gaps (Theorem~\ref{thm: diffs for triangle}). This reduces the problem to showing that there is some $\sigma_0>0$ for which all of the clusters break up completely (except for a systematic double multiplicity) for all $0<\sigma<\sigma_0$. To prove this requires a detailed study of the secular equations \eqref{coupled system intro} governing the eigenvalues, which takes up sections \ref{sec: pfs of lemmas 1 and 2}, \ref{sec: pfs of lemmas 3 and 4} and \ref{sec: pf of lemma5}.

\section{Background on the equilateral triangle}
%\subsection{Coordinate systems}
We consider an equilateral triangle $T$ of side length $h$. Denote by
$$r= \frac{h}{2\sqrt{3}} $$
the radius of the inscribed circle. The area of $T$  is then
\[
\area(T)=   \frac{\sqrt{3}h^2}{4} = 3\sqrt{3}r^2.
\]
We use  Cartesian coordinates $(x,y)$ so that the vertices are located at $\{(0,0), (0,h),(h/2, h\sqrt{3}/2)\}$
(Figure~\ref{fig:equilateral2}).

\subsection{Neumann eigenfunctions}
%The Neumann eigenfunctions satisfy
%\[
%\Delta f+ \lambda f=0, \quad \frac{\partial f}{\partial n}  =0
%\]
%where $\frac{\partial}{\partial n}$ is the derivative in the outward pointing normal direction.

The eigenfunctions are either symmetric or antisymmetric w.r.t the altitude of the triangle, that is the line $x=h/2$.
A complete set of orthogonal Neumann eigenfunctions is
\[
\begin{split}
T_{m,n}^{s/a}(x,y) &= \cos\left(\frac{\pi \ell}{3r}\left(3r-y \right)  \right)\left\{\begin{matrix} \cos \\ \sin \end{matrix}\right\}  \left(\frac{\sqrt{3}\pi(m-n)}{9r}\left(x-\sqrt{3}r\right)\right)
\\& +
\cos\left(\frac{\pi m}{3r}\left(3r-y\right)\right)  \left\{\begin{matrix} \cos \\ \sin \end{matrix}\right\} \left(\frac{\sqrt{3}\pi \left( n-\ell \right)}{9r}\left(x-\sqrt{3}r\right)\right)
\\&
+
\cos\left(\frac{\pi n}{3r}\left(3r-y \right) \right) \left\{\begin{matrix} \cos \\ \sin \end{matrix}\right\} \left(\frac{\sqrt{3}\pi \left(\ell-m \right)}{9r}\left(x-\sqrt{3}r \right) \right)
\end{split}
\]
where for the symmetric eigenfunctions $T_{m,n}^s$ we take $0\leq m\leq n$ and cosine, and for the antisymmetric ones $T_{m,n}^a$ we take $0\leq m<n$ and sine.
Here the $m,n \geq 0$ are integers, and $m,n,\ell$ satisfy
\[
m+n+\ell=0
\]
 with the corresponding eigenvalue being
\[
\Lambda_{m,n}(0):=\frac{2\pi^2}{27 r^2}\left(m^2+n^2+\ell^2\right)=\frac{4\pi^2}{27 r^2}\left( m^2+mn+n^2 \right) .
\]

There are high multiplicities in the Neumann spectrum of the equilateral triangle, coming from the fact that for integers  which can be written in  the form $m^2+mn+n^2$  there are ``typically''  many ways to do so. This is a well-understood number theoretic issue, completely similar to the problem of representation as a sum of two squares.
The squared $L^2$ norm of the eigenfunctions is \cite[\S 8.1]{McCartin Neumann}
\[
||T_{m,n}^{a/s}||_2^2=\int_T (T_{m,n}^{s/a})^2 = \frac{9\sqrt{3}r^2}{4} %=\frac{3\sqrt{3}}{16}
, \quad m<n
\]
and
\[
||T_{m,m}^{s}||_2^2 = \frac{9\sqrt{3}r^2}{2}%=\frac{3\sqrt{3}}{8}
, \quad m>0 .
\]
%We define normalized Neumann eigenfunctions by
%\begin{equation}\label{Nemann efs}
%u_{m,n}^{s/a} = \frac{ T_{m,n}^{s/a}}{ ||T_{m,n}^{a/s}||_2 }
%\end{equation}

\subsection{Robin eigenfunctions}

The eigenfunctions are either symmetric or antisymmetric w.r.t the altitude of the triangle, that is the line \new{$x=h/2$}. McCartin showed that a complete set of orthogonal eigenfunctions is
\[
\begin{split}
T_{m,n}^{s/a}(x,y) &= \cos\left(\frac{\pi \lambda}{3r}\left(3r-y\right)-\delta_1\right)\{\begin{matrix} \cos \\ \sin \end{matrix}\}  \left(\frac{\sqrt{3}\pi \left(\mu-\nu \right)}{9r} \left(x-\sqrt{3}r \right)\right)
\\& +
\cos\left(\frac{\pi \mu}{3r}\left(3r-y \right)-\delta_2 \right)  \{\begin{matrix} \cos \\ \sin \end{matrix}\} \left(\frac{\sqrt{3}\pi \left(\nu-\lambda \right)}{9r} \left( x-\sqrt{3}r \right) \right)
\\&
+
\cos \left( \frac{\pi \nu}{3r} \left(3r-y \right)-\delta_3 \right) \{\begin{matrix} \cos \\ \sin \end{matrix}\} \left(\frac{\sqrt{3}\pi \left( \lambda-\mu \right)}{9r} \left(x-\sqrt{3}r \right) \right)
\end{split}
\]
\new{with some $\delta_{1},\delta_{2},\delta_{3}\in\R$,} where for the symmetric eigenfunctions $T_{m,n}^s$ we take $0\leq m\leq n$ and cosine, and for the antisymmetric ones $T_{m,n}^a$ we take $0\leq m<n$ and sine.
Here $\mu,\nu,\lambda$ (depending on $m$, $n$ and the Robin constant $\sigma$) are chosen subject to
\[
\mu+\nu+\lambda=0
\]
and $\mu,\nu \geq 0$ are determined by a set of transcendental equations (imposed by requiring that the corresponding eigenfunctions  satisfy the Robin condition on the boundary):  Define auxiliary parameters
\begin{equation}\label{range of L,M,N}
L\in (-\pi/2,0], \quad M,N\in [0,\pi/2)
\end{equation}
 and set
\[
 \lambda=\frac{2L-M-N}{\pi}-m-n, \;  \mu=\frac{2M-N-L}{\pi}+m,\;  \nu = \frac{2N-L-M}{\pi}+n .
\]
Then $L,M,N$ are required to satisfy the coupled system of equations
\begin{equation}\label{coupled system}
\begin{split}
\Big(2L-M-N-(m+n)\pi \Big)\tan L &= 3r \sigma
\\
\Big(2M-N-L+m\pi \Big)\tan M &= 3r \sigma
\\
\Big(2N-L-M+n\pi \Big) \tan N &= 3r \sigma ,
\end{split}
\end{equation}
 see \cite{McCartin2004} for existence and uniqueness of solutions. 
 
 The corresponding eigenvalues are % parameterized by pairs of integers $m,n\geq 0$, in the form
\begin{equation}\label{expression for evs}
\eigen_{m,n} (\sigma)= \frac{2\pi^2}{27 r^2}(\mu^2+\nu^2+\lambda^2)
=\frac{4\pi^2}{27 r^2}(\mu^2+\nu^2+\mu\nu) .
\end{equation}
\new{One may find some examples of plots of $\Lambda_{m,n}(\cdot)$ in Figure \ref{fig:eigenval plots}.}
Note that there is a systematic multiplicity of order $2$ coming from the symmetry $\eigen_{m,n} = \eigen_{n,m}$.
We refer to \cite{Finjordetal} for a computation of the $L^2$ norm of the eigenfunctions.

\begin{figure}[ht]
\begin{center}
\includegraphics[height=60mm]{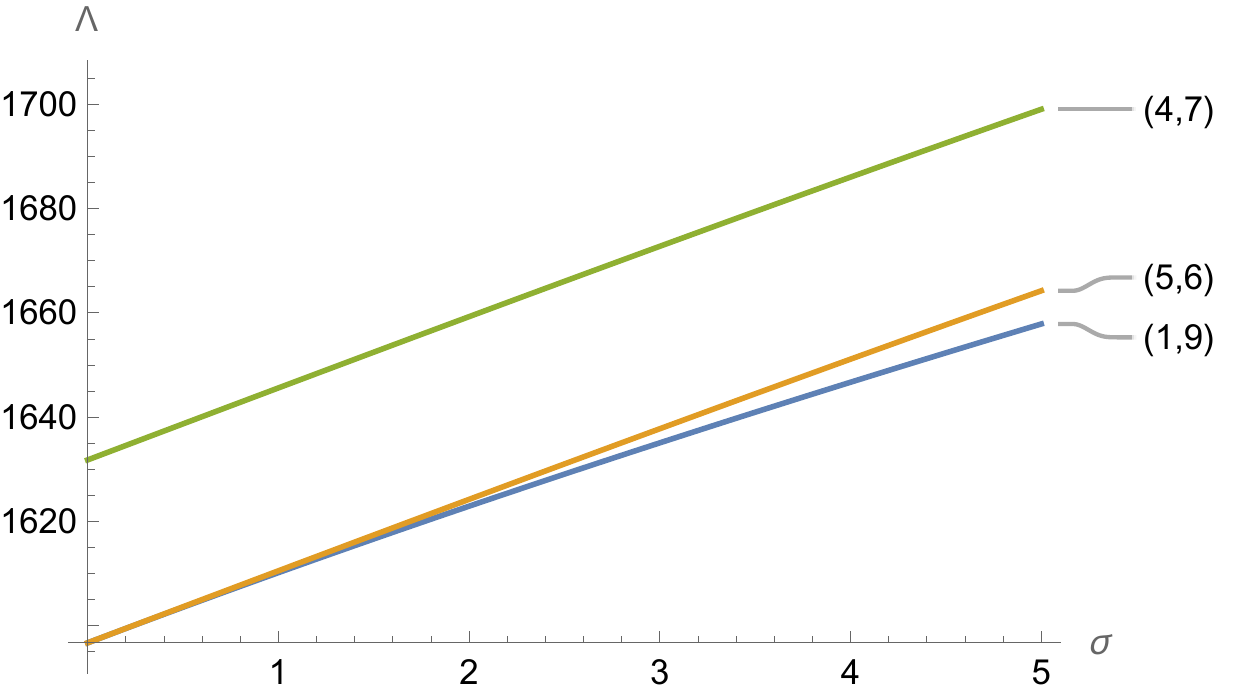}
\caption{Plots of $\Lambda_{m,n}(\sigma)$ with $(m,n)=(1,9),(5,6),(4,7)$. Note that $\Lambda_{1,9}(0)=\Lambda_{5,6}(0)$.}
\label{fig:eigenval plots}
\end{center}
\end{figure}

\section{A uniform upper bound for the RN gaps: Proof of Theorem~\ref{thm: diffs for triangle} }\label{sec: upper bound}

% For $\sigma\geq 0$, let $\lambda_{n}^{\sigma}$ denote the $n$-th eigenvalue of the Robin Laplacian on the equilateral triangle, arranged by size and repeated with multiplicities (the case $\sigma=0$ are the Neumann eigenvalues).
Our goal is to show that for the equilateral triangle, the Robin-Neumann gaps are bounded above
 by their limiting mean value, which we recall equals
\[
\bar d:=\lim_{N\to \infty} \frac 1N\sum_{n=1}^N d_n(\sigma) = \frac{2\length \partial T}{\area T} \sigma = \frac 4r \sigma.
\]
%\begin{thm}\label{thm: diffs for triangle}
%For the equilateral triangle, we have an upper bound for the RN gaps:
%\[
%d_n(\sigma) \leq \frac 4r \sigma=\bar d.
%\]
%\end{thm}

\begin{proof}
We will show that
\begin{equation}\label{deviation of eigen_{m,n}}
0<\eigen_{m,n}(\sigma)-\eigen_{m,n}(0) <   \overline{d}=\frac{4}{   r} \sigma .
\end{equation}
Given that, to pass from the $\eigen_{m,n}(\sigma)$ to its analogue for the {\em ordered} eigenvalues is the same argument as for the rectangle (see \cite[\S 8.2]{RWY}) that we reproduce here for the sake of completeness, albeit briefly. Namely, recall that $\{\lambda_{n}^{\sigma}\}_{n\ge 0}$ is the Robin spectrum corresponding to the Robin parameter $\sigma$ (in non-decreasing order), and, given $k\ge 1$, consider the closed interval $$I_{k}:=[0,\lambda_{k}^{0}+\overline{d}]\subseteq \R.$$ Then, thanks to the inequality \eqref{deviation of eigen_{m,n}} to be proved immediately below, $I_{k}$ is bound to contain all of $\lambda_{n}^{\sigma}< \lambda_{n}^{0}+\overline{d},$ for $n\le k$, i.e. $I_{k}$ contains at least $(k+1)$ of the eigenvalues $\{\lambda_{n}^{\sigma}\}$, implying, in particular, that $$\lambda_{k}^{\sigma}\le \lambda_{k}^{0}+\overline{d},$$ sufficient to deduce the claimed analogue of \eqref{deviation of eigen_{m,n}} for the ordered Robin eigenvalues.

We now turn to proving \eqref{deviation of eigen_{m,n}}. To this end we rewrite the equations \eqref{coupled system}
in a compact form as follows: Set
\[
\begin{split}
 m_1=m, \quad   & m_2=n, \quad  m_3=-(m+n),
 \\
\mu_1=\mu, \quad  &\mu_2=\nu,\quad  \mu_3=\lambda,
\\
 M_1=M, \quad &M_2=N, \quad M_3=L
\end{split}\]
so that
\begin{equation}\label{sum of mu is 0}
\mu_1+\mu_2 +\mu_3=0=m_1+m_2+m_3
\end{equation}
and
\begin{equation}\label{mu in terms of M}
\mu_j = m_j + \frac 1\pi(2M_j-M_i-M_k)
\end{equation}
where $\{i,j,k\}=\{1,2,3\}$,
and the system \eqref{coupled system} becomes
 \begin{equation*}
\   \mu_j \tan M_j =\frac{3r\sigma}{\pi}, \quad j=1,2,3  .
\end{equation*}
  Therefore, since $|M_j|<\pi/2$,
 \begin{equation}\label{M_J small}
  |\mu_j M_j| < |\mu_j\tan M_j| =\frac{3r}{\pi} \sigma .
 \end{equation}

 Now consider the difference (compare \eqref{expression for evs})
 \[
 \eigen_{m,n}(\sigma)-\eigen_{m,n}(0)=\frac{2\pi^2}{27 r^2}\sum_{j=1}^3(\mu_j^2-m_j^2) .
 \]
 % From \eqref{M_J small}, uniformly in $m,n$ as $\sigma\to 0$  \marginpar{Check $m=0$ or $n=0$}
 %\[
 %\mu_j = m_j +  \frac 1\pi (2M_j-M_i-M_k) = m_j+O(\sigma)
 %\]
We have
 \[
 \begin{split}
 \mu_j^2-m_j^2  &= (\mu_j-m_j)(2\mu_j-(\mu_j-m_j))
 \\
 &=   2\mu_j(\mu_j-m_j)-(\mu_j-m_j)^2 
 \\&\new{\leq 2\mu_j(\mu_j-m_j)}
   \\
 &= \frac{2}{\pi} \mu_j (2M_j-M_i-M_k)  .
 \end{split}
 \]
 on inserting \eqref{mu in terms of M}. Therefore %, uniformly in $m,n$,
 \[
 \begin{split}
0< \eigen_{m,n}(\sigma)-\eigen_{m,n}(0) &= \frac{2\pi^2}{27 r^2} \sum_{j=1}^3 (\mu_j^2-m_j^2)
\\
&\leq  \frac{4\pi}{27 r^2} \sum_{j=1}^3 \mu_j (2M_j-M_i-M_k)   .
 \end{split}
 \]
 Recalling that  $\{i,j,k\} = \{1,2,3\}$ and using \eqref{sum of mu is 0} gives
 \[
 \sum_{j=1}^3 \mu_j (M_i+M_k) = \sum_j M_j(\mu_i+\mu_k) = -\sum_j M_j\mu_j
 \]
 and so
 \[
 \sum_{j=1}^3 \mu_j (2M_j-M_i-M_k)  = 2 \sum_{j=1}^3 \mu_j  M_j - \sum_{j=1}^3 \mu_j (M_i+M_k)  = 3\sum_{j=1}^3M_j\mu_j .
 \]
 Inserting \eqref{M_J small} gives that this is $\leq 27\frac{r}{\pi}  \sigma$ and hence
 \[
 0<\eigen_{m,n}(\sigma)-\eigen_{m,n}(0) \leq   \frac{4\pi}{27 r^2}  \cdot   \frac{27r}{\pi}  \sigma  = \frac{4}{   r} \sigma
 \]
  proving \eqref{deviation of eigen_{m,n}}.
  \end{proof}

\section{Almost sure convergence of the RN gaps: Proof of Corollary~\ref{cor: Ae convergence of gaps}}
A tautological consequence of Theorem~\ref{thm: diffs for triangle}, which says that all RN gaps (which are positive) are bounded by their limiting mean value, is that \new{{\em almost all}} RN gaps converge to the limiting mean value $\bar d$.
%$$\bar d = \lim \frac 1N \sum_{n=1}^N d_n(\sigma) = \frac{2\length \partial T}{\area T} \sigma .$$
 % \begin{prop}
 % Fix $\sigma>0$. Then for almost all $n$, we have
 % \[
 % d_n(\sigma) = \bar d +o(1).
 %\]
%  \end{prop}
\begin{proof}
  Indeed, let $d_n\geq 0$ be a sequence of non-negative numbers, which has a limiting mean value
  $$\bar d:=\lim_{N\to \infty}\frac 1N \sum_{\new{n}=1}^N d_n,$$
  and assume that for all $n$ we have $d_n\leq \bar d$. Then we claim that necessarily, for almost all $n$, we have $d_n = \bar d +o(1)$ as $n\to \infty$. On the contrary, assume that 
  there is some $\delta>0$ so that the set
  \[
  \mathcal N_\delta:=\{n: d_n\leq \bar d-\delta\}
  \]
  satisfies:
  \begin{equation*}%\label{limsup set}
  \limsup \frac 1N\#\mathcal N_\delta\cap [1,N] =c>0.
  \end{equation*}
Thus we are guaranteed an infinite sequence $\mathcal S$ of $N$'s satisfying
\[
\#\mathcal N_\delta\cap [1,N] >cN/2.
\]

  For all $N\geq 1$, we can compute the mean value as
  \[
  \frac 1N \sum_{n=1}^N d_n =  \frac 1N \sum_{\substack{n\leq N\\ n\in \mathcal N_\delta}}  d_n +
   \frac 1N \sum_{\substack{n\leq N\\ n\notin \mathcal N_\delta}}  d_n
   \leq
    \frac 1N \sum_{\substack{n\leq N\\ n\in \mathcal N_\delta}} ( \bar d-\delta) +  \frac 1N \sum_{\substack{n\leq N\\ n\notin \mathcal N_\delta}} \bar d
  \]
  where we have used $d_n\leq \bar d$ for $n\notin \mathcal N_\delta$. In particular, for $N\in \mathcal S$,
    \begin{multline*}
  \frac 1N \sum_{n=1}^N d_n  \leq ( \bar d-\delta )  \frac 1N\#\mathcal N_\delta\cap [1,N] +\bar d  \frac 1N\#\{n\notin \mathcal N_\delta, n\leq N\}
\\  =\bar d-\delta \frac 1N\#\mathcal N_\delta\cap [1,N]  \leq \bar d -\delta \frac c2
  \end{multline*}
  and so
  \[
 \bar d =  \lim_{\substack{N\to \infty\\ N\in \mathcal S
}}  \frac 1N \sum_{n=1}^N d_n \leq \bar d -\delta \frac c2 <\bar d
  \]
  which is a contradiction.
  \end{proof}

\section{Large gaps in the Robin spectrum: \new{Proof of Corollary \ref{cor:arb large gaps}}}

%HERE erased

\begin{proof}
  Since the Robin spectrum clusters at a bounded distance around the Neumann spectrum
  $\{\frac{4\pi^2}{27 r^2}( m^2+mn+n^2 ) :m,n\geq 0\}$, it suffices to observe that the Neumann spectrum has arbitrarily large gaps. This well known arithmetic fact admits a quick proof by noting that for  integers of the form $m^2+mn +n^2$, the prime decomposition can only contain primes of the form $p=3k+2$ to an even power (see e.g. \cite[Chapter 9.1]{IR}). Let $p_1=2, p_2=5,\dots, p_K$ be the first $K$ primes congruent to $2 \bmod 3$. Using the
  Chinese Remainder Theorem we find $n$ satisfying $n=-j+p_j \bmod p_j^2$ for $j=1,\dots,K$. Then $n+1,n+2,\dots ,n+K$ are not of the form $x^2+xy+y^2$ because $n+j=0\bmod p_j$ while $n+j\neq 0\bmod p_j^2$. Thus we found a gap of size $\geq \frac{4\pi^2}{27r^2}\cdot  K$ in the Neumann spectrum.
  \end{proof}
%the sieve of Erathosthenes to deduce that such integers have zero density, hence have to have unbounded gaps.

  We can extract qualitative results from the finer results known about gaps between values of binary quadratic forms:
  %The number of such integers up to $x$ is asymptotically $cx/\sqrt{\log x}$ for a suitable $c>0$, hence the average gaps between such integers near $x$ grows like $\sqrt{\log x}/c$.
In 1982,  Richards \cite{Richards} proved that the maximal gap $g(x)$ among  integers of the form $m^2+mn+n^2$ up to $x$ is at least $ (\frac 13 -o(1)) \log x$ as $x\to \infty$, see \cite{DE, KK} for improvement to the constant. Hence, if we denote by
\[
g_\sigma(x)=  \max \left(\lambda_{n+1}^\sigma-\lambda_n^\sigma: \lambda_n^\sigma\leq x\right),
\]
then, with the help of Theorem~\ref{thm: diffs for triangle}, we obtain the bound
\[
g_\sigma(x) \gg \log x.
\]

\section{Spacings: \new{Proof of Theorem \ref{thm:nearest neigh delta}}}

Fix $\sigma>0$ and denote by $\lambda_n^\sigma$ the Robin spectrum ($\sigma=0$ being the Neumann spectrum) and let
$$x_\sigma(n) = c(\lambda_{n+1}^\sigma-\lambda_n^\sigma) , \quad c=\frac{4\pi}{\area T}$$
be the normalized nearest neighbour gaps, whose mean value is unity by Weyl's law. Let
$$\tilde P_\sigma(t,N):=\frac 1N \#\{ n\leq N: x_\sigma(n)<t\}
%made it a strict inequality
%\frac 1N \#\{ n\leq N: x_\sigma(n)\leq t\}
$$
be the cumulative distribution function of the $x_\sigma(n)$.

\new{Note that if for a pair of tuples $(m,n)$ and $(m',n')$ representing consecutive Neumann energies one has $$m^2+n^2+mn=m'^2+n'^2+m'n',$$ then $\Lambda_{m,n}(0)=\Lambda_{m',n'}(0)$, so that the corresponding nearest neighbour gap vanishes. Hence for the Neumann spectrum, all the nearest gaps vanish except for at most the number of integers representable by the form $m^2+n^2+mn$, whose number of those energies $\le X$ is  $O\left(\frac{X}{\sqrt{\log{X}}}\right)$ by ~\cite{James}. On the other hand, by Weyl's law, the total number of energies $\le X$ is proportional to $X$, hence most of the gaps vanish precisely, implying, in particular, that the limiting spacing distribution is the delta function.
Hence, in the Neumann case, the \new{limiting} spacing distribution is a delta-function at the origin: we have for any $t>0$,}
\begin{equation}\label{cumulative spacing for neumann}
\lim_{N\to \infty} \tilde P_0(t ,N) = 1 .
\end{equation}
%We will show that for $\sigma>0$,
%\[
%\lim_{N\to \infty} \left(\tilde  P_\sigma(x,N)-\tilde P_0(x,N) \right) =0
%\]
%which establishes our claim.

\begin{proof}[\new{Proof of Theorem \ref{thm:nearest neigh delta}}]

\new{By Corollary \ref{cor: Ae convergence of gaps}, the bulk of Robin spectrum is obtained from the Neumann spectrum by an approximately constant shift, therefore the spacing distribution remains unchanged.}
Denote by $d_\sigma(n) = \lambda_n^\sigma-\lambda_n^0$ the Robin-Neumann gaps and $\bar d $ their limiting mean value. Fix $\epsilon>0$ and let $\mathcal S$ be the set of integers $n$ so that
$$\bar d-\epsilon <d_\sigma(n)$$
(and also $d_n(\sigma)<\bar d$).
We showed that $\mathcal S$ has density one (Corollary~\ref{cor: Ae convergence of gaps}).
Therefore, the set
$$\mathcal S_2:=\{n\geq 1: n\in \mathcal S \mbox{  and  } n+1\in \mathcal S\}$$
also has density one.

For $n\in \mathcal S_2$, we compute the difference of the normalized gaps $x_\sigma(n)$ and $x_0(n)$:
\[
\begin{split}
\frac 1c\left( x_\sigma(n) -x_0(n) \right)
&= (\lambda_{n+1}^\sigma-\lambda_n^\sigma) - ( \lambda_{n+1}^0-\lambda_n^0)
\\&
= (  \lambda_{n+1}^\sigma-\lambda_{n+1}^0) - (\lambda_n^\sigma-\lambda_n^0)
%\\&
= d_\sigma(n+1)-d_\sigma(n)  .
\end{split}
\]
Since $d_\sigma(n+1),d_\sigma(n) \in (\bar d-\epsilon, \bar d)$ we obtain $ d_\sigma(n+1)-d_\sigma(n)  \in (-\epsilon,\epsilon)$ so that  for all $n\in \mathcal S_2$,
\[
 x_\sigma(n) -x_0(n)\in (-c\epsilon, c\epsilon).
\]

Fix $t>0$, and take $\epsilon<t/c$. Then for $n\in S_2$, if $x_\sigma(n) < t$ then $x_0(n)<t+c\epsilon$, while $x_0(n)<t-c\epsilon$ implies that $x_\sigma(n)<t$. Thus
\[
\{ n\in \mathcal S_2: x_\sigma(n)<t \}  \subseteq \{n\in \mathcal S_2: x_0(n)<t+c\epsilon\}
\]
and
\[
 \{ n\in \mathcal S_2: x_\sigma(n)<t \}  \supseteq \{n\in \mathcal S_2: x_0(n)<t-c\epsilon\} .
\]
\new{On the other hand,} we have
\begin{multline*}
0\leq \tilde P_\sigma(t,N) - \frac 1N \#\{n\leq N, n\in \mathcal S_2: x_\sigma(n) < t \}
\\
\leq \frac 1N\#\{n\leq N: n\notin \mathcal S_2\} = o(1) ,
\end{multline*}
and likewise for $\sigma=0$, hence
\[
\new{\tilde P_0}(t-c\epsilon,N) +o(1) \leq \tilde P_\sigma(t,N)\leq \tilde P_0(t+c\epsilon,N) +o(1) .
\]
Since $\epsilon>0$ is arbitrary,   for any fixed $t>0$, we obtain  by \eqref{cumulative spacing for neumann}
\[
\lim_{N\to \infty} \tilde P_\sigma(t,N) = 1
\]
  which gives our claim.
 \end{proof}

\section{Simplicity of the desymmetrized spectrum: overview of the proof of Theorem~\ref{thm:simplicity}}
\subsection{Review of notation }
We recall notation:
%\fbox{\textcolor{red}{change and merge later}}
For integers $m,n\geq 0$, and $\sigma\ge 0$, we defined the variables
\begin{equation}
\label{eq:LMN range}
L=L_{m,n}(\sigma)\in \bigg(-\frac \pi 2,0\bigg],\; M=M_{m,n}(\sigma),N=N_{m,n}(\sigma)\in \bigg[0,\frac \pi 2\bigg)
\end{equation}
given by solutions of the system
\begin{equation}
\label{eq:sec eq}
\begin{cases}
\left(2L-M-N-\left(m+n\right)\pi  \right)\tan{L} &=3r\sigma \\
\left(2M-N-L+m\pi  \right)\tan{M} &=3r\sigma \\
\left(2N-L-M+n\pi  \right)\tan{N} &=3r\sigma
\end{cases}.
\end{equation}
The variables $\mu,\nu$ were defined as %\marginpar{Do we need $\lambda$?}
\[
\mu=\frac{2M-N-L}{\pi}+m, \;\nu=\frac{2N-L-M}{\pi}+n,
\]
and, finally, the Robin eigenvalues with parameter $\sigma$ are:
\[
 \Lambda_{m,n}(\sigma):= \frac{4\pi^{2}}{27r^{2}}\left(\mu^{2}+\nu^{2}+\mu\nu\right).
\]

%For every $m,n\geq 0$ and $\sigma \ge 0$ one has   $$\Lambda_{m,n}(\sigma)=\Lambda_{n,m}(\sigma),$$ so that there is a systematic double multiplicity.
%We remove it by assuming that $0\le m\le n$, and call the resulting set of eigenvalues the ``desymmetrized spectrum''.
To prove that the desymmetrized spectrum is simple (Theorem~\ref{thm:simplicity}), it is needed to show that
%\begin{thm}\label{thm:spec nondeg}
there exists $\sigma_{0}>0$ so that for all $\sigma\in (0,\sigma_{0})$, one has $\Lambda_{m,n}(\sigma)\neq \Lambda_{m',n'}(\sigma)$ for all pairs  $(m,n)\neq (m',n')$ with $0\leq m\leq n$, $0\leq m'\leq n'$.
We adopt the notation
\begin{equation}
\label{eq:R radius def}
R^{2}=R^{2}(m,n):=\frac{27 r^2}{4\pi^2}\Lambda_{m,n}(0)=m^{2}+n^{2}+mn,
\end{equation}
and
\[
F_R(m,n) = \frac{R^4}{m^2n^2(m+n)^2}  = \frac 1{m^2}+\frac 1{n^2}+\frac 1{(m+n)^2} .
\]

\subsection{Key propositions}
\begin{prop}
\label{prop:asymp exp Lambda}
For $\sigma>0$ sufficiently small:
\begin{enumerate}

\item For $1\le m\le n$,
%\fixme{In effect, this is the cubic expansion of $\Lambda_{m,n}(\cdot)$,
%with a bound on the $4$th derivative for the error term, but it seems extremely long to differentiate the system for the $4$th (!) time.
%It seems that there are two powers of $m$ drop for every {\em two} derivations of $\Lambda_{m,n}(\cdot)$, due to the even derivatives of $\frac{\tan{x}}{x}$. (unlike the square, where it is so for every derivation).}
\begin{equation}
\label{eq:3 term exp Lambda}
\Lambda_{m,n}(\sigma) = \Lambda_{m,n}(0) + \frac{4}{r}\cdot \sigma -
\frac{4 F_R(m,n)  }{\pi^{2} } \cdot \sigma^{2}(1-r\sigma) + O\left(\frac{1}{m^{4}}\cdot \sigma^{3}\right),
\end{equation}
with the implied constant absolute.

\item For $m=0<n$, $\Lambda_{0,n}(\cdot)$ satisfies\footnote{By general perturbation theory ~\cite[Chapter VII]{Kato}
(see in particular Remark 4.22 on page $408$), the functions $\Lambda_{0,n}(\cdot)$ are analytic, at least in some neighbourhood of the origin. Therefore the remainder term in \eqref{eq:Lambda0,n asympt} can be replaced by  $O_{n}(\sigma^{2})$.}
\begin{equation}
\label{eq:Lambda0,n asympt}
\Lambda_{0,n}(\sigma) = \Lambda_{0,n}(0)+ \frac{10}{3r}\cdot \sigma + O(\sigma^{3/2}),
\end{equation}
with the implied constant absolute.

\item The function $\Lambda_{0,0}(\sigma)$ is continuous\footnote{In fact, $\Lambda_{0,0}(\cdot)$ is analytic, by~\cite[Chapter VII]{Kato}.}
%\footnote{\textcolor{red}{this is a consequence of general theory}}
at $\sigma=0$.
\end{enumerate}
\end{prop}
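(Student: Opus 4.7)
The plan is to expand $M, N, L$ (and hence $\mu, \nu$) as (possibly fractional) power series in $\sigma$ by inverting the secular system \eqref{eq:sec eq}, insert into \eqref{def of eigen}, and carefully collect terms to the stated order. Throughout I use the notation from the proof of Theorem~\ref{thm: diffs for triangle}: set $(m_1,m_2,m_3)=(m,n,-(m+n))$, $(M_1,M_2,M_3)=(M,N,L)$, $(\mu_1,\mu_2,\mu_3)=(\mu,\nu,\lambda)$, and $\delta_j:=\mu_j-m_j=(3M_j-S)/\pi$ with $S:=M_1+M_2+M_3$; the system reads $\pi\mu_j\tan M_j = 3r\sigma$ for each $j$, subject to $\sum m_j=\sum\mu_j=\sum\delta_j=0$. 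Two key identities will do most of the work. The first is algebraic:
\begin{equation*}
\sum_j(\mu_j^2-m_j^2) = \frac{6}{\pi}\sum_j M_j\mu_j - \sum_j\delta_j^2 = \frac{18c}{\pi}-\frac{2c}{\pi}\sum M_j^2-\frac{2c}{15\pi}\sum M_j^4-\sum\delta_j^2+O(cM^6),
\end{equation*}
with $c=3r\sigma/\pi$; this combines $\sum\mu_j\delta_j=\frac{3}{\pi}\sum M_j\mu_j$ (from $\sum\mu_j=0$), the secular relation $M_j\mu_j=c\cdot\frac{M_j}{\tan M_j}$, and $\frac{x}{\tan x}=1-\frac{x^2}{3}-\frac{x^4}{45}+O(x^6)$. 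The second is arithmetic: $\bigl(\sum_j\frac{1}{m_j}\bigr)^2=F_R(m,n)$, a direct check using $\frac{1}{m}+\frac{1}{n}-\frac{1}{m+n}=\frac{R^2}{mn(m+n)}$, which will collapse the coefficients into the expected form.

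For part (1), with $m\ge 1$ no $m_j$ vanishes and the implicit function theorem supplies smooth $M_j(\sigma)$ near $\sigma=0$. Iterating $M_j=\arctan(c/\mu_j)$ yields $M_j=\frac{3r\sigma}{\pi m_j}+O(\sigma^2/m^2)$ with controlled higher corrections. Then $\sum M_j^2=\frac{9r^2\sigma^2}{\pi^2}F_R+O(\sigma^3/m^3)$ (using $\sum\frac{1}{m_j^2}=F_R$), and $\sum\delta_j^2=\frac{3}{\pi^2}\sum_{i<j}(M_i-M_j)^2$ together with $\sum_{i<j}\bigl(\frac{1}{m_i}-\frac{1}{m_j}\bigr)^2=3F_R-(\sum\frac{1}{m_j})^2=2F_R$ gives $\sum\delta_j^2=\frac{54r^2\sigma^2F_R}{\pi^4}+O(\sigma^3/m^3)$. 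Substituting into the displayed identity and multiplying by $\frac{2\pi^2}{27r^2}$ yields $\frac{4\sigma}{r}$ at first order and $-\frac{4F_R}{\pi^2}\sigma^2$ at second order; the third-order coefficient $+\frac{4rF_R}{\pi^2}\sigma^3$ emerges from the next $\sigma$-corrections in $\sum M_j^2$ and $\sum\delta_j^2$, combining with the $-\frac{2c}{\pi}\sum M_j^2$ term to reassemble into $-\frac{4F_R(m,n)}{\pi^2}\sigma^2(1-r\sigma)$ with remainder $O(\sigma^3/m^4)$.

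For part (2), the degeneracy $m_1=0$ forces $\mu_1=\delta_1$ itself to be small. To leading order $\mu_2\approx n$, $\mu_3\approx -n$, hence $M_2\approx\frac{3r\sigma}{\pi n}\approx -M_3$ and $M_2+M_3=O(\sigma^2)$; the first secular equation $\mu_1\tan M_1=c$ with $\mu_1\approx\frac{2M_1}{\pi}$ then forces $M_1\sim\sqrt{3r\sigma/2}$. Thus $M_1,\mu_1=O(\sqrt\sigma)$ while $M_2,M_3=O(\sigma)$ and $\delta_2,\delta_3\approx -M_1/\pi=O(\sqrt\sigma)$. Computing $\mu_1^2=\frac{6r\sigma}{\pi^2}+O(\sigma^{3/2})$ and
\begin{equation*}
\mu_2^2+\mu_3^2-2n^2 = 2n(\delta_2-\delta_3)+\delta_2^2+\delta_3^2+O(\sigma^{3/2}),
\end{equation*}
where $\delta_2-\delta_3=\frac{3(M_2-M_3)}{\pi}=\frac{18r\sigma}{\pi^2 n}$ (the $M_1$ contributions cancel crucially) and $\delta_2^2+\delta_3^2=\frac{3r\sigma}{\pi^2}+O(\sigma^{3/2})$, I obtain $\sum\mu_j^2-2n^2=\frac{45r\sigma}{\pi^2}+O(\sigma^{3/2})$ and hence the claimed $\frac{10\sigma}{3r}$. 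For part (3) the cleanest route is the cited Kato analytic perturbation theory: the Robin Laplacian is a self-adjoint holomorphic family of type (B) in $\sigma$, and since $\Lambda_{0,0}(0)=0$ is a simple eigenvalue (well separated from the next one), $\Lambda_{0,0}(\sigma)$ is real analytic at $\sigma=0$, in particular continuous.

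The hardest part I anticipate is obtaining the error in \eqref{eq:3 term exp Lambda} as $O(\sigma^3/m^4)$ with an \emph{absolute} implied constant. This requires uniform control of the fixed-point iteration producing $M_j(\sigma)$ across all $m\ge 1$: for small $m_j$ the iteration converges because $\sigma$ is small, while for large $m_j$ extra smallness $\sim m_j^{-1}$ appears in the denominators, and the two regimes must be patched together. The combinatorial identity $(\sum 1/m_j)^2=F_R$ eliminates the dangerous-looking $O(\sigma^2)$ terms one might otherwise expect from $S^2$, but careful bookkeeping of all $\sigma^3$ contributions is unavoidable.
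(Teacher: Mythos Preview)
Your argument is sound, and for parts (2) and (3) it coincides with the paper's: both derive $M_1\sim\sqrt{3r\sigma/2}$, $M_2,-M_3\sim 3r\sigma/(\pi n)$ and insert into $\sum\mu_j^2$ for part (2); for part (3) the paper uses the elementary fact $L,M,N\to 0$, while you invoke Kato (which the paper also mentions in a footnote).

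For part (1) the routes diverge. The paper does not expand $M_j$ as a series in $\sigma$; instead it differentiates the secular system to obtain $M_j'(0)=3r/(\pi m_j)$, $M_j''(0)$ exactly, and then bounds $M_j'(\sigma)$, $M_j''(\sigma)$, $M_j'''(\sigma)$ \emph{uniformly on a fixed interval} $[0,\sigma_0]$ by solving the differentiated system with a rank-one linear-algebra inversion lemma. From these it assembles $\Lambda_{m,n}'(0)=4/r$, $\Lambda_{m,n}''(0)=-8F_R/\pi^2$, and the uniform estimate $\Lambda_{m,n}'''(\sigma)=24rF_R/\pi^2+O(1/m^4)$, and then Taylor's theorem with Lagrange remainder delivers \eqref{eq:3 term exp Lambda} directly---including the uniform $O(\sigma^3/m^4)$ you flag as the hard part. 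Your identity $\sum(\mu_j^2-m_j^2)=\tfrac{6}{\pi}\sum M_j\mu_j-\sum\delta_j^2$ together with $(\sum 1/m_j)^2=F_R$ is a slick way to obtain the $\sigma$ and $\sigma^2$ coefficients (and is not used in the paper), but to extract the $\sigma^3$ coefficient and the uniform remainder you still need the second-order expansion of each $M_j$ with uniform control of the tail, which amounts to reproving the paper's derivative bounds. The Lagrange-remainder device is what buys the paper uniformity for free; in your fixed-point iteration you will have to track the $\sigma^3$ contributions from the $O(\sigma^2)$ correction to $M_j$ inside $\sum\delta_j^2$ (note that $-\tfrac{2c}{\pi}\sum M_j^2$ alone gives the \emph{wrong sign} at $\sigma^3$, so the cancellation you allude to is real and must be computed) and then bound the $\sigma^4$ residue uniformly in $m$.
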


\begin{prop}\label{prop:2nd term asymp}
If $(m,n)$ and $(m',n')$ are two integer points on the ellipse $$X^2+XY+Y^2=R^2$$ with $1\leq m<m'\leq n'<n$, then $F_R(m,n)>F_R(m',n')$ and as $R\to \infty$ we have a lower bound for the difference
\[
F_R(m,n)-F_R(m',n')  \gg \frac 1{m^4}
\]
with the implied constant absolute.
\end{prop}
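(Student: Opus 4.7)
My plan is to first establish the sign $F_R(m,n)>F_R(m',n')$ by monotonicity along the ellipse, then upgrade to the quantitative bound $\gg 1/m^4$ via a case split at $m\sim\sqrt R$.

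For monotonicity, I parametrize the arc $\{(X,Y):X^2+XY+Y^2=R^2,\ 0\le X\le Y\}$ by $X\in[0,R/\sqrt3]$ with $Y(X)=(-X+\sqrt{4R^2-3X^2})/2$ and set $s(X)=X+Y(X)$. A direct differentiation gives $s'(X)=\tfrac12-\tfrac{3X}{2\sqrt{4R^2-3X^2}}>0$ on $[0,R/\sqrt3)$, so $s$ is strictly increasing, and $m<m'$ forces $s:=m+n<m'+n'=:s'$. Using $X^2+Y^2=2R^2-s^2$ and $XY=s^2-R^2$, I rewrite $F_R=\tfrac{2R^2-s^2}{(s^2-R^2)^2}+\tfrac1{s^2}$; its derivative $\tfrac{dF_R}{ds}=\tfrac{2s(s^2-3R^2)}{(s^2-R^2)^3}-\tfrac2{s^3}$ is manifestly negative on $(R,2R/\sqrt3]$ (both summands are negative), giving $F_R(m,n)>F_R(m',n')$.

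The key integrality input is: since $s,s'$ are positive integers with $s<s'$ and $s\ge R$ (from $(m+n)^2\ge m^2+mn+n^2=R^2$), one has $s'-s\ge1$ and $s+s'\ge 2R$, so $m'n'-mn=(s'-s)(s+s')\ge 2R$. Integrating $-dF_R/ds$ on $[s,s']$ (with the substitution $u=\tilde s^2-R^2$) produces the closed form
\[F_R(m,n)-F_R(m',n') = (m'n'-mn)\Bigl[\tfrac{R^2(mn+m'n')-mn\cdot m'n'}{(mn)^2(m'n')^2}+\tfrac1{s^2s'^2}\Bigr].\]
\emph{Case $m\ge\sqrt{R/6}$:} Dropping the $1/(s^2s'^2)$ term and using $R^2(mn+m'n')-mn\cdot m'n'\ge R^2\,m'n'$ (from $R^2\ge mn$), together with $n\le R$ and $m'n'\le R^2/3$ (AM--GM on $m'^2+m'n'+n'^2=R^2$), yields $F_R(m,n)-F_R(m',n')\ge 6/(m^2R)\ge 1/m^4$, the last step being exactly the hypothesis.

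\emph{Case $m<\sqrt{R/6}$:} I return to the $X$-parametrization. Implicit differentiation yields $Y'=-(2X+Y)/(X+2Y)$, and the polynomial identity $Y^3(X+2Y)-X^3(2X+Y)=(Y-X)(X+Y)(2X^2+XY+2Y^2)$ (from grouping the LHS as $XY(Y^2-X^2)+2(Y^4-X^4)$) leads after one line to the factored form
\[-F_R'(X) = \frac{2(Y-X)}{X+2Y}\Bigl[\tfrac{(X+Y)(2X^2+XY+2Y^2)}{X^3Y^3}+\tfrac1{(X+Y)^3}\Bigr].\]
For $X\in[m,m+1]\subseteq[m,m']$ with $m<\sqrt{R/6}$ and $R$ large, each of $Y, Y-X, X+Y, X+2Y$ is $\asymp R$ and $2X^2+XY+2Y^2\asymp R^2$, with absolute implied constants, so $-F_R'(X)\gtrsim 1/X^3$ uniformly on $[m,m+1]$; integration then gives $F_R(m,n)-F_R(m',n')\gtrsim 1/m^2-1/(m+1)^2\gtrsim 1/m^3\ge 1/m^4$ for $m\ge1$ (finitely many small $R$ are absorbed into the implied constant). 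The main technical obstacle is the factorization extracting $(Y-X)$ as an explicit factor of $-F_R'(X)$: this cancellation is what reveals the correct scaling $1/X^3$ when $Y\gg X$ and is the crux of Case B. Once that identity is in place, both cases dovetail routinely around the threshold $m\sim\sqrt R$.
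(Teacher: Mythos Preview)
Your proof is correct, but it follows a genuinely different route from the paper's.

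\textbf{What the paper does.} The paper works throughout with a single scaled variable $x=m/R$ and the function $f(x)=1/(x^2(1-x^2)^2)$, so that $F_R(m,n)=\tfrac1{R^2}f(m/R)$. It then splits into three cases according to the \emph{ratio} $m'/m$ and the size of $m/R$: (i) $m'>10m$, handled by the trivial bounds $F_R(m,n)>1/m^2$ and $F_R(m',n')<3/(10m)^2$; (ii) $\delta R<m$ and $m'\le 10m$, handled by the mean value theorem together with explicit numerical lower bounds on $-f'$ and $f''$ (with a separate sub-case $m'=n'$, where $f'(1/\sqrt3)=0$ forces a second-order Taylor argument); (iii) $m\le\delta R$ and $m'\le10m$, again by the MVT using $-f'(t)>1/t^3$ for small $t$.

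\textbf{What you do differently.} You use \emph{two} parametrizations. First, via $s=m+n$ you obtain monotonicity and the exact closed-form
\[
F_R(m,n)-F_R(m',n')=(m'n'-mn)\Bigl[\tfrac{R^2(mn+m'n')-mn\cdot m'n'}{(mn)^2(m'n')^2}+\tfrac1{s^2s'^2}\Bigr],
\]
together with the arithmetic input $m'n'-mn=(s'-s)(s+s')\ge 2R$. This handles your Case~A ($m\ge\sqrt{R/6}$) cleanly with only the exact inequalities $n\le R$, $m'n'\le R^2/3$, and $R^2\ge m'n'$ (your parenthetical ``from $R^2\ge mn$'' should read $R^2\ge m'n'$, but the step is correct either way since both hold). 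Second, for Case~B you switch to the $X$-parametrization and extract the factor $(Y-X)$ from $-F_R'(X)$ via the polynomial identity, giving $-F_R'(X)\asymp 1/X^3$ when $X\ll\sqrt R$; integration over $[m,m+1]\subseteq[m,m']$ then yields $\gg 1/m^3$.

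\textbf{What each buys.} The paper's single function $f$ is conceptually clean, but the argument leans on a numerical estimate ($\min_{[0,1/\sqrt3]}f''\approx 119$) and a separate treatment of the degenerate point $m'=n'$. Your approach is more algebraic: the closed-form difference formula and the factorization of $-F_R'(X)$ replace analytic estimates by identities, your single threshold $m\sim\sqrt R$ avoids the auxiliary ratio split $m'\gtrless 10m$, and no special case $m'=n'$ is needed. Both arguments ultimately hinge on the same integrality gain $s'-s\ge1$ (equivalently $m'-m\ge1$), just deployed in different coordinates.
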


\subsection{Proof of Theorem \ref{thm:simplicity} assuming Propositions~\ref{prop:asymp exp Lambda}-\ref{prop:2nd term asymp}}
\begin{proof}

We assert that for any integers $m,n,m',n'\geq 0$, one has:

(i) If $$m^{2}+n^{2}+mn < m'^{2}+n'^{2}+m'n',$$ then $\Lambda_{m,n}(\sigma)<\Lambda_{m',n'}(\sigma)$ for $\sigma\in (0,\pi^2/(27 r))$.

(ii) For $\sigma>0$ sufficiently small (absolute), if  $0\le m<m'\le n'<n$ satisfy $$m^{2}+n^{2}+mn = m'^{2}+n'^{2}+m'n'$$
then $\Lambda_{m,n}(\sigma)>\Lambda_{m',n'}(\sigma)$.

The desymmetrized Neumann spectrum $\{\Lambda_{m,n}(0):0\leq m\leq n\}$ is partitioned into clusters of coinciding eigenvalues
\[
\mathcal C_R = \{\Lambda_{m,n}(0): 0\leq m\leq n, m^2+mn+n^2=R^2\} .
\]
Part (i) deals with the situation that at $\sigma=0$, we start from different clusters $\mathcal C_R$, $\mathcal C_{R'}$ with $R<R'$, and the claim is that there is some $\sigma_0$ so that for $\sigma\in (0,\sigma_0)$, the evolved clusters remain separate.
Since distinct integers are spaced at least one apart from each other, the distance between different Neumann clusters ($\sigma=0$)
is at least $4\pi^2/(27r^2)$. We use   our upper bound (Theorem~\ref{thm: diffs for triangle}) on the Robin-Neumann gaps:
 \[
\lambda_n(\sigma)-\lambda_n(0) < \frac{4}{r}\sigma,
\]
 so that if $4\sigma/r<4\pi^2/(27r^2)$ then different Robin clusters cannot mix, that is
\[
\Lambda_{m,n}(\sigma)<\Lambda_{m',n'}(\sigma)
\]
 for $\sigma\in (0,\pi^2/(27 r))$.
%\marginpar{note the new proof of part (i)}

Now take integers $0\le m<m'\le n'<n$, so that $$m^{2}+n^{2}+mn = m'^{2}+n'^{2}+m'n'$$
(equivalently, $\Lambda_{m,n}(0)=\Lambda_{m',n'}(0)$).
If $m=0$, then we invoke Proposition \ref{prop:asymp exp Lambda}(1)-(2) to write
\begin{equation*}
\Lambda_{m',n'}(\sigma)-\Lambda_{0,n}(\sigma) =  \frac{2}{3r}\cdot\sigma +O(\sigma^{3/2}) >0,
\end{equation*}
since $F(m',n')<3$.
%$$\frac{m'^{2}+n'^{2}+m'n'}{\pi^{2} m^{2}n^{2}(m+n)^{2}}$$ is bounded by an absolute constant (recall \eqref{eq:R radius def};
%in this context $R=R(m',n')$).

Otherwise, if $m\ge 1$, then we invoke Proposition \ref{prop:asymp exp Lambda}(1) to yield
\begin{equation*}
\Lambda_{m',n'}(\sigma)-\Lambda_{m,n}(\sigma) =\left(  F_R(m,n)-F_R(m',n')\right)\cdot  \frac{4\sigma^{2}(1-r\sigma) }{\pi^{2}}+ O\left( \frac{\sigma^{3}}{m^{4}}  \right)
\end{equation*}
which along with Proposition \ref{prop:2nd term asymp} show that for $\sigma>0$ \new{sufficiently small},
\[
\Lambda_{m',n'}(\sigma)-\Lambda_{m,n}(\sigma) \gg  \frac{\sigma^2}{m^4}+ O\left( \frac{\sigma^{3}}{m^{4}}  \right) \gg   \frac{\sigma^{2}}{m^{4}} >0
\]
in particular this difference is nonzero. In either case, $m=0$ or $m\ge 1$, (ii) is proved.
\end{proof}

\section{Asymptotic expansion of the eigenvalue curves}
\subsection{Some auxiliary results}
 We state some lemmas on the properties of the auxiliary parameters $M$, $N$  and $L$, which we then use to prove
 Proposition \ref{prop:asymp exp Lambda}.

\begin{lem}\label{lem:LMN->0}

For every $0\le m\le n$ and $\sigma\ge 0$ there exists a unique solution $(L,M,N)$ to \eqref{eq:sec eq} in the prescribed range \eqref{eq:LMN range}. These solutions satisfy:

\begin{enumerate}

\item
For $0 \le m< n$, $\sigma>0$ one has $L,N = O\left( \frac{\sigma}{n}\right)$, with the implied constant  absolute.

\item In addition to the above, $M=O\left( \frac{\sigma}{m}\right)$ uniformly for $1\le m\le n$, $\sigma>0$.
Otherwise, for $m=0<n$, one has $M= O\left(\sqrt{\sigma}\right)$ for $\sigma>0$ sufficiently small,
with the implied constant  absolute.

\item For $m=n=0$ one has $|L|,|M|,|N| \ll \sqrt{\sigma}$, so, in particular the functions $L,M,N$ are continuous at $\sigma = 0$.

\end{enumerate}

\end{lem}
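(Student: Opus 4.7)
The existence and uniqueness of solutions $(L,M,N)$ in the prescribed range \eqref{eq:LMN range} is due to McCartin and can be quoted from \cite{McCartin2004}. All the size bounds, which are the substance of the lemma, will come from reading off each equation of \eqref{eq:sec eq} directly, combining the sign information dictated by \eqref{eq:LMN range} with the elementary inequality $|x| \le |\tan x|$ valid on $(-\pi/2,\pi/2)$.

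For part (1), I would extract $N$ from the third equation of \eqref{eq:sec eq} and $L$ from the first. Using \eqref{eq:LMN range}, the coefficient $2N-L-M+n\pi$ lies in the interval $[(n-1/2)\pi,(n+3/2)\pi]$, which is $\Theta(n)$ once $n\ge 1$; dividing gives $|\tan N|=3r\sigma/|2N-L-M+n\pi|=O(\sigma/n)$, and $|N|\le|\tan N|$ yields $N=O(\sigma/n)$. For $L$, the coefficient of the first equation has absolute value in $[(m+n)\pi,(m+n+2)\pi]\ge n\pi$ under the hypothesis $m<n$, and the same argument gives $|L|=O(\sigma/n)$.

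Part (2) in the range $m\ge 1$ is identical in spirit: the coefficient $2M-N-L+m\pi$ of the second equation lies in $[(m-1/2)\pi,(m+3/2)\pi]$, which is $\Theta(m)$, so $|M|\le|\tan M|=O(\sigma/m)$. The interesting case is $m=0<n$, where the $m\pi$ term is absent and the coefficient $2M-N-L$ has no built-in lower bound of linear order. Here I would argue in two steps. First, show qualitatively that $M\to 0$ as $\sigma\to 0$: if some subsequence had $M\ge c>0$, then using $L,N=O(\sigma/n)$ from part (1), both $2M-N-L\ge c$ and $\tan M\ge \tan c>0$ would be bounded below, contradicting $(2M-N-L)\tan M=3r\sigma\to 0$. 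Second, once $M$ is small, use $\tan M=M+O(M^3)$ and the bounds on $L,N$ to rewrite the second equation as $2M^{2}=3r\sigma+O(M^{4})+O(\sigma M/n)$, which yields $M=O(\sqrt{\sigma})$ for $\sigma$ sufficiently small.

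For part (3), I would exploit the invariance of \eqref{eq:sec eq} (with $m=n=0$) under the involution $M\leftrightarrow N$, which by uniqueness forces $M=N$. Writing $L=-\ell$ with $\ell\in[0,\pi/2)$, the three equations collapse to the pair $2(\ell+M)\tan\ell=3r\sigma$ and $(M+\ell)\tan M=3r\sigma$, whose ratio gives $\tan M=2\tan\ell$. A contradiction argument analogous to the one in part (2) forces $\ell,M\to 0$ as $\sigma\to 0$; then $M\approx 2\ell$, and the second relation becomes $(M+\ell)\tan M\approx 6\ell^{2}=3r\sigma$, so $\ell,M=O(\sqrt{\sigma})$, and hence $|L|,|M|,|N|\ll\sqrt{\sigma}$. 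Continuity at $\sigma=0$ in all three parts is then immediate from the derived bounds. The main obstacle is the $m=0$ regime (including $m=n=0$): without an $m\pi$ term serving as a built-in lower bound, one cannot directly invert the equation, and the two-step device -- qualitative smallness first, then Taylor expansion of $\tan$ -- is what produces the weaker $\sqrt{\sigma}$ rate as opposed to the linear $\sigma/m$ rate available when $m\ge 1$.
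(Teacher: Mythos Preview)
Your proposal is correct and follows essentially the same approach as the paper: bound the coefficient in each secular equation using the range constraints \eqref{eq:LMN range}, divide to control $\tan$ of the variable, and use $|x|\le|\tan x|$. The only tactical difference is in the degenerate cases $m=0$: where you use a two-step ``qualitative smallness, then Taylor expansion'' device, the paper argues more directly---for $m=0<n$ via the dichotomy $M<10(|L|+N)$ versus $M\ge 10(|L|+N)$, and for $m=n=0$ by reading off $2L\tan L\le 3r\sigma$ straight from the first reduced equation (using $-2M\tan L\ge 0$), which gives $|L|\ll\sqrt{\sigma}$ without any preliminary limiting argument.
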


Lemma \ref{lem:LMN->0} implies in particular, that, as $\sigma\rightarrow 0$, one has $$L(\sigma),M(\sigma),N(\sigma) \rightarrow 0$$ {\em uniformly} w.r.t. $m,n\ge 0$. Therefore
%$\lambda_{m,n}(\sigma)\rightarrow\lambda_{m,n}(0)=-(m+n),$
$\mu_{m,n}(\sigma) \rightarrow \mu_{m,n}(0)=m$ and $\nu_{m,n}(\sigma)\rightarrow \nu_{m,n}(0)=n$ uniformly.
It will also follow a  fortiori from our analysis below that uniformly in $m,n$,
\[
\lim_{\sigma \to 0} \Lambda_{m,n}(\sigma) =\Lambda_{m,n}(0)=\frac{4\pi^2}{27r^2} \left( m^{2}+n^{2}+mn \right) .
\]
% {\em uniformly}, but not from the above results, in case $n/m$ is large (or $m=0$ and $n\rightarrow \infty$).
\begin{lem}\label{lem:exp LMN m=0}
For $m=0$, $n\geq 1$ the functions $L,M,N$ are analytic on $\sigma>0$ and continuous at $\sigma=0$, with $L,N$ continuously differentiable on $\R_{\ge 0}$. Further, $L,M,N$ satisfy the following asymptotics around the origin, with all the implied constants  absolute:
\begin{equation*}%\label{eq:L 2term asymp}
N,-L=\frac{3r}{n\pi}\cdot \sigma
+O\left(\frac{\sigma^{3/2}}{n^{2}}\right), \quad M=\sqrt{3r/2} \cdot\sqrt{\sigma} +  O(\sigma^{3/2}).
\end{equation*}
%and
%\begin{equation*}%\label{eq:M 2term asymp}
%M=\sqrt{3r/2} \cdot\sqrt{\sigma} +  O(\sigma^{3/2}).
%\end{equation*}
%\item
%\begin{equation}
%\label{eq:N 2term asymp}
%N=\frac{3r}{ n\pi}\cdot \sigma +O\left(\frac{\sigma^{3/2}}{n^{2}}\right).
%\end{equation}
\end{lem}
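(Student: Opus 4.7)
The plan is two-fold: a direct bootstrap through the secular equations \eqref{eq:sec eq} specialized to $m=0$ produces the asymptotic expansions, while a rescaled implicit function theorem argument handles the analyticity and $C^{1}$-regularity. Lemma \ref{lem:LMN->0} supplies the crude bounds $L,N=O(\sigma/n)$ and $M=O(\sqrt{\sigma})$ that seed the bootstrap.

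For $N$, I would substitute $\tan N = N + O(N^{3})$ into the third equation of \eqref{eq:sec eq} with $m=0$ to get
\[
(n\pi+2N-L-M)\,N = 3r\sigma + O(n\,N^{3}).
\]
Using the crude bounds, the left-hand side equals $n\pi\,N + O(\sqrt{\sigma}\,|N|) + O(\sigma^{3}/n^{2})$, which rearranges to $n\pi\,N = 3r\sigma + O(\sigma^{3/2}/n)$ and hence $N = \tfrac{3r}{n\pi}\sigma + O(\sigma^{3/2}/n^{2})$; the expansion for $L$ follows symmetrically from the first equation. For $M$, I would then feed the sharpened expansions of $L$ and $N$ back into the second equation; the crucial cancellation $L+N = O(\sigma^{3/2}/n^{2})$ gives
\[
2M^{2} = 3r\sigma + O(M^{4}) + O\!\bigl(M(L+N)\bigr) = 3r\sigma + O(\sigma^{2}),
\]
and extracting the positive square root yields $M = \sqrt{3r/2}\,\sqrt{\sigma} + O(\sigma^{3/2})$.

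For analyticity on $\sigma>0$, I would apply the implicit function theorem directly to \eqref{eq:sec eq} at any fixed solution in the range \eqref{eq:LMN range}: the system is jointly real-analytic (all $|L|,|M|,|N|<\pi/2$), and the Jacobian with respect to $(L,M,N)$ is dominated by the diagonal entries $(2L-M-N-n\pi)\sec^{2}L$,  $2\tan M + (2M-N-L)\sec^{2}M$, and $(2N-L-M+n\pi)\sec^{2}N$, all non-zero in view of the leading-order behaviour above. Continuity at $\sigma=0$ is then immediate from Lemma \ref{lem:LMN->0} together with the expansions just derived.

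The subtle point is the $C^{1}$ claim for $L,N$ at $\sigma=0$: the square-root behaviour of $M$ creates a branch that risks contaminating the derivatives of $L,N$. To isolate it I would introduce rescaled unknowns $s=\sqrt{\sigma}$, $\tilde L = L/s^{2}$, $\tilde M = M/s$, $\tilde N = N/s^{2}$, divide each equation in \eqref{eq:sec eq} by $s^{2}$, and use the identities $\tan(s^{2}\tilde L)/s^{2} = \tilde L + O(s^{4})$ and $\tan(s\tilde M) = s\tilde M + O(s^{3})$ to check that the rewritten system is jointly analytic in $(s,\tilde L,\tilde M,\tilde N)$ near the base point $\bigl(0,-\tfrac{3r}{n\pi},\sqrt{3r/2},\tfrac{3r}{n\pi}\bigr)$, with Jacobian at $s=0$ diagonal with non-zero entries $-n\pi$, $2\sqrt{6r}$, $n\pi$. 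The implicit function theorem then yields $\tilde L, \tilde M, \tilde N$ analytic in $s$, so $L=s^{2}\tilde L(s)$ and $N=s^{2}\tilde N(s)$ are continuously differentiable in $\sigma=s^{2}$ on $\R_{\ge 0}$, with $L'(0)=-3r/(n\pi)$ and $N'(0)=3r/(n\pi)$ matching the asymptotic leading coefficients. The main obstacle I anticipate is precisely this bookkeeping: verifying that the apparent $1/s$ singularities coming from the factor $\tan(s\tilde M)$ in the rescaled second equation cancel against the $s^{2}\tilde L, s^{2}\tilde N$ factors, so that the $G$-system is regular at $s=0$.
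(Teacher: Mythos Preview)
Your bootstrap for the asymptotic expansions and your Jacobian argument for analyticity on $\sigma>0$ are essentially the paper's proof: the paper likewise substitutes $\tan L = L + O(L^3)$ into the $L$-equation (then symmetrically for $N$), uses the resulting cancellation $L+N = O(\sigma^{3/2}/n^2)$ in the $M$-equation to extract $2M^2 = 3r\sigma + O(\sigma^2)$, and then computes the full $3\times 3$ Jacobian of \eqref{eq:sec eq} at a solution, finding $|\det J_{0,n}(\sigma)| \sim 4\pi^2 n^2 M \gg \sqrt{\sigma}$, so the analytic implicit function theorem applies for each $\sigma>0$. One small slip: your diagonal entries for the $L$- and $N$-equations should carry the additional terms $2\tan L$ and $2\tan N$; these are subdominant, so the conclusion is unaffected, but note that the $M$-row is \emph{not} diagonally dominant (its off-diagonal entries $-\tan M$ are of the same order $\sqrt{\sigma}$ as the diagonal), so ``dominated by the diagonal'' should be read as ``the determinant is asymptotic to the product of diagonal entries'', which the paper verifies explicitly.

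Where you go beyond the paper is the $C^1$-regularity of $L,N$ at $\sigma=0$: the paper does not give an explicit argument for this, whereas your rescaling $s=\sqrt{\sigma}$, $(\tilde L,\tilde M,\tilde N)=(L/s^2,M/s,N/s^2)$ desingularises the system at the origin and yields analyticity of $\tilde L,\tilde M,\tilde N$ in $s$, from which continuous differentiability of $L=s^2\tilde L(s)$ and $N=s^2\tilde N(s)$ in $\sigma=s^2$ follows cleanly via $dL/d\sigma = \tilde L(s)+\tfrac{s}{2}\tilde L'(s)$. This is a genuine addition. Your anticipated obstacle is a non-issue: after dividing the $M$-equation by $s^2$ one has $(2\tilde M - s\tilde N - s\tilde L)\cdot \dfrac{\tan(s\tilde M)}{s}=3r$, and $\tan(s\tilde M)/s$ is jointly analytic in $(s,\tilde M)$ with value $\tilde M$ at $s=0$; the needed factor of $s$ comes from the leading $2s\tilde M$ in the bracket, not from the $s^2\tilde L$, $s^2\tilde N$ terms.
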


To state the next lemmas, we use the uniform notation as in \S~\ref{sec: upper bound}, where we rewrite the  coupled system \eqref{eq:sec eq}
in compact form as follows: Set
%\marginpar{do we need $\lambda$?}
\[
\begin{split}
 m_1=m, \quad   & m_2=n, \quad  m_3= -(m+n),
 \\
\mu_1=\mu, \quad  &\mu_2=\nu,\quad  \mu_3 =-(\mu_1+\mu_2),
\\
 M_1=M, \quad &M_2=N, \quad M_3=L,
\end{split}
\]
so that
\[
\mu_1+\mu_2 +\mu_3=0=m_1+m_2+m_3
\]
and
\[
\mu_j = m_j + \frac 1\pi(2M_j-M_i-M_k),
\]
where $\{i,j,k\}=\{1,2,3\}$. Thus for each $0\leq m_1\leq m_2$, we obtain
   a coupled system for the variables $M_1,M_2,M_3$ with   $M_1,M_2,-M_3\in [0,\pi/2)$
 \begin{equation}\label{new coupled system}
    \mu_j \tan M_j =\frac{3r\sigma}{\pi}, \quad j=1,2,3  .
\end{equation}
%The first part of Lemma 1.5 in this  notation becomes
\begin{lem} \label{new lemma 1.5}

If $1\leq m_1\leq m_2$ then the   derivatives at $\sigma=0$ are
\begin{equation}\label{Mj'}
M_j'(0) = \frac{3r}{\pi m_j},
\end{equation}
and
\begin{equation}\label{Mj''}
 M_j''(0) = -\frac{18 r^2}{\pi^3} \frac{m_j^2+2m_i m_k}{m_j^3m_im_k}
\end{equation}
\end{lem}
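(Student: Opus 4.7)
The plan is to apply the implicit function theorem to \eqref{new coupled system} at $(M_1,M_2,M_3,\sigma)=(0,0,0,0)$ to establish smoothness, and then to differentiate the secular relations once and twice at $\sigma=0$, extracting $M_j'(0)$ and $M_j''(0)$ in turn.

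First I would check non-degeneracy of the system. Writing $g_j(M_1,M_2,M_3;\sigma) := \mu_j\tan M_j - 3r\sigma/\pi$, a direct computation gives
\[
\frac{\partial g_j}{\partial M_j}\bigg|_{\mathbf{M}=0,\sigma=0} = m_j, \qquad \frac{\partial g_j}{\partial M_i}\bigg|_{\mathbf{M}=0,\sigma=0} = 0 \quad(i\neq j),
\]
since $\tan M_j$ vanishes at $\mathbf{M}=0$. The Jacobian is therefore diagonal with entries $m_1,m_2,m_3$, all nonzero under the hypothesis $1\le m_1\le m_2$ (which forces $m_3=-(m_1+m_2)\ne 0$). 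The implicit function theorem thus yields analytic functions $M_j(\sigma)$ near $\sigma=0$ with $M_j(0)=0$; these coincide with the solutions of \eqref{new coupled system} guaranteed by Lemma~\ref{lem:LMN->0}.

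Next I would differentiate $\mu_j(\sigma)\tan M_j(\sigma) = 3r\sigma/\pi$ once. This yields
\[
\mu_j'\tan M_j + \mu_j\sec^2 M_j \cdot M_j' = \frac{3r}{\pi}.
\]
Evaluating at $\sigma=0$, where $M_j=0$ and $\mu_j=m_j$, gives $m_j M_j'(0)=3r/\pi$, i.e.\ \eqref{Mj'}. Differentiating a second time and evaluating at $\sigma=0$ (again using $\tan M_j=0$, $\sec^2 M_j=1$) kills several terms and leaves
\[
2\mu_j'(0) M_j'(0) + m_j M_j''(0) = 0.
\]
From $\mu_j = m_j + \pi^{-1}(2M_j-M_i-M_k)$ and \eqref{Mj'} one computes
\[
\mu_j'(0) = \frac{1}{\pi}\bigl(2M_j'(0)-M_i'(0)-M_k'(0)\bigr) = \frac{3r}{\pi^{2}}\left(\frac{2}{m_j}-\frac{1}{m_i}-\frac{1}{m_k}\right),
\]
hence
\[
M_j''(0) = -\frac{2\mu_j'(0) M_j'(0)}{m_j} = -\frac{18 r^{2}}{\pi^{3}m_j^{2}}\cdot\frac{2m_im_k - m_j(m_i+m_k)}{m_j m_i m_k}.
\]

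Finally I would simplify the numerator using the constraint $m_i+m_k=-m_j$, which turns $2m_im_k - m_j(m_i+m_k)$ into $m_j^{2}+2m_im_k$, yielding \eqref{Mj''}. The whole argument is essentially routine implicit differentiation; the only real point of care is justifying that $M_j$ is twice differentiable at $\sigma=0$, which is why the IFT verification in the first step is the step one should not skip.
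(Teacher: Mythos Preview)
Your proof is correct and follows essentially the same route as the paper: differentiate the relation $\mu_j\tan M_j=3r\sigma/\pi$ once and twice, evaluate at $\sigma=0$ using $M_j(0)=0$, $\mu_j(0)=m_j$, and simplify via $m_i+m_k=-m_j$. The only difference is that you front-load the implicit function theorem to justify differentiability, whereas the paper defers analyticity of the $M_j$ to the subsequent Lemma~\ref{the second part of Lemma 1.5. }.
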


We next give bounds for the  derivatives of $M_j$:
 \begin{lem} \label{the second part of Lemma 1.5. }
There is some $\sigma_0>0$ so that if $1\leq m_1\leq m_2$ then  $M_j(\sigma)$ are  analytic in $[0,\sigma_0]$ and satisfy
(uniformly in $[0,\sigma_0]$)
\begin{equation}\label{first deriv of M}
 M_j'=  \frac{3r  }{\pi m_j}\left( 1+O\left(\frac 1{m_j}\right) \right),
\end{equation}
\begin{equation}\label{second deriv of M}
 |M_j''|\ll \frac 1{m_1 m_j^2} ,
\end{equation}
\begin{equation}\label{M_j'''}
M_j''' = 2\left(M_j' \right)^3 + O\left(\frac 1{m_1^3m_j^2}\right)=  \frac{54 r^3}{\pi^3 m_j^3} + O\left( \frac 1{m_1^3m_j^2}\right) .
\end{equation}
\end{lem}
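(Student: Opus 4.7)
The strategy is to apply a quantitative implicit function theorem (IFT) to obtain analyticity on a uniform $\sigma$-interval, then iteratively differentiate the secular equations \eqref{new coupled system} in $\sigma$, solving the resulting linear systems perturbatively via the observation that the coefficient matrix is a small perturbation of the diagonal matrix $D=\operatorname{diag}(m_1,m_2,m_3)$.

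For analyticity, I would apply the quantitative IFT to the real-analytic map $F_j(M_1,M_2,M_3,\sigma):=\mu_j\tan M_j - 3r\sigma/\pi$ at the initial solution $M=0$, $\sigma=0$. Its $M$-Jacobian there is $D$, with $\|D^{-1}\|=1/m_1\le 1$, and all higher $M$-derivatives of $F_j$ are uniformly bounded on the compact set $\{|M_j|\le\pi/4\}$ independently of $m_1,m_2$; this supplies a uniform $\sigma_0>0$. Differentiating \eqref{new coupled system} once in $\sigma$ yields the $3\times 3$ linear system $J(\sigma)M'(\sigma)=b$ with $b=(3r/\pi,3r/\pi,3r/\pi)$, $J_{jj}=\mu_j\sec^{2}M_j+(2/\pi)\tan M_j$, and $J_{jk}=-\tan M_j/\pi$ for $k\ne j$. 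Using Lemma~\ref{lem:LMN->0} one finds $J=D+E$ with $E_{jj}=O(\sigma/m_1)$ and $E_{jk}=O(\sigma/m_j)$, so $\|D^{-1}E\|_{\infty}\ll \sigma$ and a Neumann-series inverse gives $M_j'=3r/(\pi m_j)+O(\sigma/(m_1 m_j^2))$. Absorbing $\sigma\le\sigma_0$ and $m_1\ge 1$ into the error yields \eqref{first deriv of M}. Differentiating once more, $JM''=-(\partial_\sigma J)M'$, and direct chain-rule computation bounds $\partial_\sigma J_{jj}=O(1/m_1)$, $\partial_\sigma J_{jk}=O(1/m_j)$; together with the preceding $M'$ estimate this yields \eqref{second deriv of M}.

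For the third derivative \eqref{M_j'''}, the leading $2(M_j')^3$ term arises from the cubic contribution of the Taylor expansion $\tan M_j=M_j+M_j^3/3+O(M_j^5)$: at $\sigma=0$, differentiating $M_j^3/3$ three times in $\sigma$ produces an extra $2(M_j'(0))^3$ summand (all other terms vanishing because $M_j(0)=0$), on top of the naive $M_j'''(0)$. Applying the Leibniz rule to $\mu_j\tan M_j=3r\sigma/\pi$, evaluating the third $\sigma$-derivative at $\sigma=0$ and using $\mu_j(0)=m_j$ isolates the $2(M_j'(0))^3$ leading term in $M_j'''(0)$, while the remaining corrections involve $\mu_j'(0)M_j''(0)$ and $\mu_j''(0)M_j'(0)$. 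These are bounded using $\mu_j^{(\ell)}=(2M_j^{(\ell)}-M_i^{(\ell)}-M_k^{(\ell)})/\pi$ and the asymmetric estimates from the preceding step; the total correction is $O(1/(m_1^3 m_j))$ before dividing by $m_j$, giving the stated remainder $O(1/(m_1^3 m_j^2))$. Propagation to general $\sigma\in[0,\sigma_0]$ uses the same scheme at each $\sigma$. The main technical obstacle throughout is the precise bookkeeping of which factors in the iterated computation carry $1/m_1$ versus $1/m_j$, exploiting $m_1\le\min(m_2,|m_3|)$ at each step to secure the sharp error exponents.
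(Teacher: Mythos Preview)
Your overall strategy matches the paper's: both differentiate the secular equations and solve the resulting linear systems perturbatively. However, there are two soft spots where your sketch glosses over the actual work.

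\textbf{Analyticity.} Your claim that all higher $M$-derivatives of $F_j$ are uniformly bounded on $\{|M_j|\le\pi/4\}$ is false: for instance $\partial^2 F_j/\partial M_j^2$ contains the term $2\mu_j\sec^2 M_j\tan M_j$, which is of size $\asymp m_j$ at generic points of that cube. A naive quantitative IFT therefore does not deliver a $\sigma_0$ independent of $m_1,m_2$. What rescues the situation is the a~priori localization $|M_j(\sigma)|\ll\sigma/|m_j|$ from Lemma~\ref{lem:LMN->0}, which confines the solution to a region where the Jacobian $J(\sigma)$ stays within $O(\sigma)$ of $D$ in a suitable row-weighted sense; one then applies the ordinary analytic IFT at each point of $[0,\sigma_0]$. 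The paper also treats this step tersely, but your stated justification is incorrect as written.

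\textbf{Component-wise precision.} The Neumann-series inversion of $J=D+E$ does not automatically give the sharp $1/m_j^2$ dependence in \eqref{first deriv of M}--\eqref{M_j'''}; an operator-norm argument only yields errors of size $O(1/m_1^2)$, which is useless when $|m_j|\gg m_1$. The paper's device is to first divide the $j$-th equation by $\mu_j\sec^2 M_j$, using $\tan M_j/\mu_j=3r\sigma/(\pi\mu_j^2)$, so that the resulting system is $(I+B)\vec M'=y$ with \emph{every} entry of row $j$ of $B$ of size $O(\sigma/m_j^2)$ and $y_j=O(1/m_j)$. This row-structure immediately gives $(B^s y)_j=O(\sigma^s/(m_1^{s}m_j^2))$ for $s\ge 1$, hence the claimed component-wise bound; the paper packages this as a rank-one lemma (Lemma~\ref{linear alg lem}). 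In your formulation $D^{-1}E$ has diagonal entries $O(\sigma/(m_1 m_j))$ rather than $O(\sigma/m_j^2)$, so the bookkeeping you allude to in your last sentence is genuinely more delicate---one must compute the first Neumann correction by hand and only then invoke a (weighted) norm bound for the tail. This is fixable, but it is not the ``direct Neumann series'' you describe, and the paper's normalization is exactly the trick that makes the iteration clean.
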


 %\marginpar{This is your Lemma 1.6}
\begin{lem}\label{lem:2term exp m,n>=1}
The values of the first three derivatives of $\Lambda_{m,n}(\cdot)$ at the origin are:

\begin{equation}\label{lem:2term exp m,n>=1 part 1}
\Lambda_{m,n}'(0) = \frac 4r
\end{equation}
\begin{equation}\label{lem:2term exp m,n>=1 part 2}
\Lambda_{m,n}''(0) = -\frac{8}{\pi^2}F_R(m,n)
%-\frac{8R^4}{\pi^2 m^2n^2(m+n)^2} =  -\frac{8R^4}{\pi^2 m_1^2m_2^2 m_3^2}
\end{equation}
\begin{equation}\label{lem:2term exp m,n>=1 part 3}
\Lambda_{m,n}^{(3)}(\sigma) = \frac{24 r}{\pi^2}F_R(m,n) +O(\frac 1{m^4}) .
%\frac{24 R^4r}{\pi^2 m_1^2m_2^2 m_3^2}  +O(\frac 1{m_1^4})
\end{equation}
\end{lem}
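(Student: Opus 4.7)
The plan is to differentiate $\Lambda_{m,n}(\sigma)=\frac{2\pi^{2}}{27r^{2}}(\mu_{1}^{2}+\mu_{2}^{2}+\mu_{3}^{2})$ directly and reduce everything to derivatives of the auxiliary variables $M_{j}$, which are supplied by Lemmas~\ref{new lemma 1.5} and~\ref{the second part of Lemma 1.5. }. Since $\mu_{j}=m_{j}+\tfrac{1}{\pi}(2M_{j}-M_{i}-M_{k})$ is affine in $(M_{1},M_{2},M_{3})$, we have $\mu_{j}(0)=m_{j}$ and $\mu_{j}^{(p)}=\tfrac{1}{\pi}(2M_{j}^{(p)}-M_{i}^{(p)}-M_{k}^{(p)})$ for $p\geq 1$. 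Two elementary identities coming from $m_{1}+m_{2}+m_{3}=0$ will drive the computation: first, a direct rearrangement using $\sum m_{j}=0$ gives $\sum_{j}m_{j}\mu_{j}^{(p)}=\tfrac{3}{\pi}\sum_{j}m_{j}M_{j}^{(p)}$ for every $p\geq 1$; second, expanding the square and invoking $\sum m_{j}=0$ yields
\[
\sum_{j}\tfrac{1}{m_{j}^{2}}=\Bigl(\sum_{j}\tfrac{1}{m_{j}}\Bigr)^{2}=F_{R}(m,n).
\]

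For the first derivative, $\Lambda_{m,n}'(0)=\tfrac{4\pi^{2}}{27r^{2}}\sum m_{j}\mu_{j}'(0)=\tfrac{12\pi}{27r^{2}}\sum m_{j}M_{j}'(0)$, and substituting $M_{j}'(0)=3r/(\pi m_{j})$ from Lemma~\ref{new lemma 1.5} gives $\sum m_{j}M_{j}'(0)=9r/\pi$, hence $4/r$. For the second derivative, write $\Lambda_{m,n}''(0)=\tfrac{4\pi^{2}}{27r^{2}}\bigl[\sum(\mu_{j}'(0))^{2}+\sum m_{j}\mu_{j}''(0)\bigr]$ and set $\alpha_{j}=M_{j}'(0)$, $S=\sum_{j}\alpha_{j}$. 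A direct expansion using $\sum_{j}\alpha_{j}^{2}=S^{2}$ yields $\sum(\mu_{j}'(0))^{2}=6S^{2}/\pi^{2}$; meanwhile, implicit differentiation of the secular equation $\mu_{j}\tan M_{j}=3r\sigma/\pi$ twice and evaluation at $\sigma=0$ gives $m_{j}M_{j}''(0)=-2\mu_{j}'(0)M_{j}'(0)$, producing $\sum m_{j}\mu_{j}''(0)=-12S^{2}/\pi^{2}$. Assembling, $\Lambda_{m,n}''(0)=-8S^{2}/(9r^{2})$, and since $S^{2}=(9r^{2}/\pi^{2})F_{R}(m,n)$, the claimed value $-8F_{R}(m,n)/\pi^{2}$ drops out.

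For the third derivative, $\Lambda_{m,n}'''(\sigma)=\tfrac{4\pi^{2}}{27r^{2}}\sum_{j}[3\mu_{j}'\mu_{j}''+\mu_{j}\mu_{j}''']$. Lemma~\ref{the second part of Lemma 1.5. } supplies the uniform bounds $\mu_{j}'=O(1/m_{1})$, $\mu_{j}''=O(1/m_{1}^{3})$, and $\mu_{j}'''=O(1/m_{1}^{3})$, so the $3\mu_{j}'\mu_{j}''$ sum and the correction $\sum(\mu_{j}-m_{j})\mu_{j}'''=O(\sigma/m_{1}^{4})$ are both absorbed into the $O(1/m^{4})$ error, leaving $\sum m_{j}\mu_{j}'''(\sigma)=\tfrac{3}{\pi}\sum m_{j}M_{j}'''(\sigma)$ as the main contribution. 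Integrating the bound $M_{j}''=O(1/(m_{1}m_{j}^{2}))$ from Lemma~\ref{the second part of Lemma 1.5. } sharpens the estimate to $M_{j}'(\sigma)=3r/(\pi m_{j})+O(1/(m_{1}m_{j}^{2}))$, which, combined with $M_{j}'''=2(M_{j}')^{3}+O(1/(m_{1}^{3}m_{j}^{2}))$, produces $\sum m_{j}M_{j}'''=\tfrac{54r^{3}}{\pi^{3}}F_{R}(m,n)+O(1/m^{4})$, yielding $\Lambda_{m,n}'''(\sigma)=\tfrac{24r}{\pi^{2}}F_{R}(m,n)+O(1/m^{4})$.

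The main obstacle is precisely the error analysis for the third derivative: the pointwise bound $M_{j}'=(3r/(\pi m_{j}))(1+O(1/m_{j}))$ from the statement of Lemma~\ref{the second part of Lemma 1.5. } alone would give only $\sum m_{j}(M_{j}')^{3}=\tfrac{27r^{3}}{\pi^{3}}F_{R}+O(1/m^{3})$, a factor of $m$ too weak, so one must reinvest the integrated version $O(1/(m_{1}m_{j}^{2}))$ of the bound on $M_{j}''$. The clean appearance of the single invariant $F_{R}(m,n)$ as the governing coefficient at every order, rather than a more complicated function of $m$ and $n$, is ultimately forced by the constraint $\sum m_{j}=0$ and the resulting identity $\sum 1/m_{j}^{2}=(\sum 1/m_{j})^{2}$.
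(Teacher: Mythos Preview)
Your proposal is correct and follows essentially the same route as the paper: differentiate $\Lambda_{m,n}=\tfrac{2\pi^{2}}{27r^{2}}\sum\mu_{j}^{2}$ and feed in the $M_{j}$-derivatives from Lemmas~\ref{new lemma 1.5} and~\ref{the second part of Lemma 1.5. }, with the same crucial observation that the naive bound on $M_{j}'$ must be sharpened (via $M_{j}''$) to get the $O(1/m^{4})$ error in the third derivative. Your handling is a bit more streamlined than the paper's---you use the identities $\sum m_{j}\mu_{j}^{(p)}=\tfrac{3}{\pi}\sum m_{j}M_{j}^{(p)}$ and $\sum 1/m_{j}^{2}=(\sum 1/m_{j})^{2}=F_{R}$ systematically, and for $\Lambda''(0)$ you invoke the secular relation $m_{j}M_{j}''(0)=-2\mu_{j}'(0)M_{j}'(0)$ directly rather than the explicit formula~\eqref{Mj''}---but the strategy is the same.
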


The proofs of these Lemmas will be given in \S~\ref{sec: pfs of lemmas 1 and 2}, \S \ref{sec: pfs of lemmas 3 and 4} and
\S \ref{sec: pf of lemma5}.

 \subsection{Proof of Proposition \ref{prop:asymp exp Lambda}}
\begin{proof}
% \fbox{\textcolor{red}{Redo this}}

Proposition \ref{prop:asymp exp Lambda}(1) is a direct consequence of \eqref{lem:2term exp m,n>=1 part 1} and \eqref{lem:2term exp m,n>=1 part 2}
via a three-term Taylor expansion around $\sigma=0$, invoking the Lagrange form of the remainder appealing to the estimate
\eqref{lem:2term exp m,n>=1 part 3}: For every $\sigma>0$ sufficiently small, one has the estimate
\begin{equation*}
\Lambda_{m,n}(\sigma) = \Lambda_{m,n}(0) + \frac{4}{r}\cdot \sigma -
\frac{4 F_R(m,n)}{\pi^{2} }\cdot \sigma^{2} + \left(\frac{4 r F_R(m,n)}{\pi^{2}}+ O\left(\frac{1}{m^{4}}\right)\right)\cdot \sigma^{3},
\end{equation*}
%\marginpar{shouldn't this be big O of $\sigma^3$?}
which yields \eqref{eq:3 term exp Lambda}.
Part (3) of Proposition \ref{prop:asymp exp Lambda} is a direct consequence of Lemma \ref{lem:LMN->0}(3).

 To prove Proposition \ref{prop:asymp exp Lambda}(2), namely that
 \[
 \Lambda_{0,n}(\sigma)= \Lambda_{0,n}(0) + \frac{10}{3r}\cdot\sigma + O(\sigma^{3/2})
 \]
 we write
 \[
 \Lambda_{0,n}(\sigma)= \frac{2\pi^2}{27r^2}\sum_{j=1}^3 \mu_j^2
 \]
 Using $m=0$, $n\geq 1$ we have
 \[
 \mu_1 = \frac 1\pi(2M_1-M_2-M_3)
 \]
 \[
  \mu_2 = n+\frac 1\pi(2M_2-M_3-M_1),\quad \mu_3 = -n +\frac 1\pi(2M_3-M_1-M_2)
 \]
 so that
 \[
 \sum_{j=1}^3 \mu_j^2= 2n^2 + \frac {6n}\pi ( M_2-M_3) +\frac 1{\pi^2} \sum_{j=1}^3 (2M_j-M_i-M_k)^2 .
 \]
 Inserting Lemma~\ref{lem:exp LMN m=0} which asserts that
 \[
M_1 = \sqrt{\frac{3r\sigma}{2}} +O(\sigma^{3/2}),
\quad M_2,-M_3 = \frac{3r}{n\pi}\sigma + O(\frac{\sigma^{3/2}}{n^2})
 \]
 gives
 \[
 \sum_{j=1}^3 \mu_j^2= 2n^2 +\frac{45r}{\pi^2}\sigma + O(\sigma^{3/2})
 \]
 so that
 \[
  \Lambda_{0,n}(\sigma)= \frac{2\pi^2}{27r^2}\sum_{j=1}^3 \mu_j^2 =\Lambda_{0,n}(0) + \frac{10r}{3}\sigma + O(\sigma^{3/2})
 \]
 as claimed.
 \end{proof}

\section{Proofs of lemmas \ref{lem:LMN->0}-\ref{lem:exp LMN m=0}}\label{sec: pfs of lemmas 1 and 2}

\subsection{Proof of Lemma~\ref{lem:LMN->0}}

\begin{proof}
The existence and uniqueness of the solutions $(L,M,N)$ to \eqref{eq:sec eq} was established in ~\cite[\S 6]{McCartin2004}.
We observe that $L\le 0$ and $M,N\ge 0$ forces $2L-M-N\le 0$, and so
$$2L-M-N - (m+n)\pi \le -(m+n)\pi.$$
Hence, the first equation of \eqref{eq:sec eq} implies that if $(m,n)\ne (0,0)$ (which, for $m\le n$ is equivalent to $n\ne 0$), then $$|\tan{L}|\ll \frac{\sigma}{m+n}\le \frac{\sigma}{n},$$ and so $$L\ll \frac{\sigma}{n}.$$ The proof of $N\ll \frac{\sigma}{n}$ is similar to the above, except that we focus on the $3$rd equation of \eqref{eq:sec eq} and notice that
%, under the said constraints for $L,M,N$, one has
\[
2N-L-M+n\pi\ge n\pi - M \ge (n-1/2)\pi\gg n.
\]
This concludes the proof of Lemma \ref{lem:LMN->0}(1), and the same argument yields the case $m>0$ of Lemma \ref{lem:LMN->0}(2), by exploiting the $2$nd equation of \eqref{eq:sec eq}.

If $m=0$ but $n>0$, then, the $2$nd equation of \eqref{eq:sec eq} reads
\begin{equation}
\label{eq:2nd eq m=0}
(2M-L-N)\tan{M}=3r\sigma.
\end{equation}
First, assume by contradiction that $M<10\cdot(|L|+N)$ (say). Then, assuming that $\sigma>0$ is sufficiently small, so that, with the help from the (readily established) part (1) of Lemma \ref{lem:LMN->0}, $|L|,|N|< 1/100$ (so that $0\le M<1/5$), the l.h.s. of \eqref{eq:2nd eq m=0} is bounded above by
\begin{equation*}
(2M-L-N)\tan{M} \ll \frac{\sigma}{n} \cdot M \ll \frac{\sigma^{2}}{n^{2}},
\end{equation*}
so the equality \eqref{eq:2nd eq m=0} cannot hold with $\sigma>0$ \new{sufficiently} small. Hence we may as well assume that
$$M\ge 10\cdot (|L|+N).$$ But then we may deduce from \eqref{eq:2nd eq m=0}:
\begin{multline*}
M^{2} \ll (2M-M/10)\cdot M \le (2M-N)\cdot M
\\
 \ll (2M-N)\tan{M} \le (2M-L-N)\tan{M} = 3r\sigma,
\end{multline*}
so that $M\ll \sqrt{\sigma}$ as in the $2$nd assertion of Lemma \ref{lem:LMN->0}(2).

Finally we show Lemma \ref{lem:LMN->0}(3):  Since $m=n=0$, then the equality $M=N$ is forced
by the symmetry between these two. (If, by contradiction, $M>N$, then the l.h.s. of the $2$nd equation of \eqref{eq:sec eq}
is strictly bigger than the l.h.s. of the $3$rd equation of \eqref{eq:sec eq}.) Then the system \eqref{eq:sec eq}
reads
\begin{equation}
\label{eq:sec eq m=n=0}
\begin{cases}
2(L-M)\tan{L} &=3r\sigma \\
(M-L)\tan{M} &=3r\sigma
\end{cases}
\end{equation}
Then, by the $1$st equation of \eqref{eq:sec eq m=n=0},
and recalling that $L,\tan{L}\le 0$ and $M\ge 0$, it forces $2L\tan{L}\le 3r\sigma$, and so $L\ll \sqrt{\sigma}$, as above.
Further, either $M=L\ll \sqrt{\sigma}$ or we may divide the equations in \eqref{eq:sec eq m=n=0}, and  so
\[
M\ll\tan{M}=-2\tan{L}\ll \sqrt{\sigma},
\]
as we have already seen. This concludes the proof of Lemma~\ref{lem:LMN->0}(3).
\end{proof}

\subsection{ Proof of Lemma~\ref{lem:exp LMN m=0} }
%\fbox{\textcolor{red}{redo}}

Recall \eqref{eq:sec eq} (with $m=0$), namely
\begin{equation}\label{system with m=0}
\begin{split}
(2M-N-L)\tan M-3r\sigma &= 0 \\
(\pi n + 2N-L-M)\tan N-3r\sigma&=0 \\
(-\pi n + 2L-M-N)\tan L-3r\sigma &=0
\end{split}
\end{equation}
Consider the third equation of \eqref{system with m=0}:
 Thanks to Lemma \ref{lem:LMN->0}(1)
we may write $$\tan{L}=L+O(L^{3}) = L+O(\sigma^{3}/n^{3}),$$ and, in addition, from Lemma \ref{lem:LMN->0}(2), one has
\begin{equation*}
n\pi L = -3r\sigma +O(\sigma^{3/2}/n),
\end{equation*}
and then
\begin{equation}
\label{eq:L asympt 1term}
L = -\frac{3r}{n\pi} \cdot \sigma + O(\sigma^{3/2}/n^{2}).
\end{equation}
%that is identified as \eqref{eq:L 2term asymp}.
Analogously,
\begin{equation}
\label{eq:N asympt 1term}
N = \frac{3r}{n\pi} \cdot \sigma + O(\sigma^{3/2}/n^{2}) .
\end{equation}
%that is \eqref{eq:N 2term asymp}.

Next we focus on the first equation of \eqref{system with m=0}: We write
$$\tan(M) = M+\frac{1}{3}M^{3}+O(M^{5}) = M+\frac{1}{3}M^{3}+O(\sigma^{5/2}),$$ and feed \eqref{eq:L asympt 1term} and
\eqref{eq:N asympt 1term} into it to derive:
\begin{equation*}
%\label{eq:2M^2 asymp intermediate}
\begin{split}
2M^{2}&=3r\sigma +M(L+N)-\frac{2}{3}M^{4}-\frac{1}{3}M^{3}(L+N)+O(\sigma^{3})= 3r\sigma +O(\sigma^{2}) \\&=
3r\sigma(1+O(\sigma)),
\end{split}
\end{equation*}
so that
\begin{equation*}
M = \sqrt{\frac{3r}{2}}\cdot \sqrt{\sigma}(1+O(\sigma)) = \sqrt{\frac{3r}{2}}\cdot\sqrt{\sigma}+O(\sigma^{3/2}) .
\end{equation*}
%that is \eqref{eq:M 2term asymp}.

The continuity of $L,M,N$ at $\sigma=0$ follows directly from Lemma \ref{lem:LMN->0}, and here we deal with the analyticity of $L,M,N$ for $\sigma>0$ sufficiently small. We want to use the analytic Implicit Function theorem for the system \eqref{system with m=0}.
To do that we evaluate the Jacobian of \eqref{system with m=0} (with $m=0$) as
\[
J_{0,n}(\sigma)= \begin{pmatrix}
\frac{2M-N-L}{\cos^2 M}  +2\tan M & -\tan M & -\tan M \\
-\tan N &  \frac{\pi n + 2N-L-M}{\cos^2 N} & -\tan N \\
-\tan L & -\tan L & \frac{ -\pi n + 2L-M-N}{\cos^2 L} +2\tan L
\end{pmatrix} .
\]
For $\sigma \to 0$, we have $M,N,L \to 0$ so
\[
\tan M\sim M = O\left(\sqrt{\sigma}\right), \quad \tan N\sim N=O\left(\frac{\sigma}{n}\right),
\]
and likewise $\tan L =O(\frac{\sigma}{n}) $. Also
\[
\frac{1}{\cos^2M} = 1+\tan^2 M = 1+O(\sigma), \quad \frac{1}{\cos^2N}=1+O\left(\frac{\sigma^2}{n^2} \right),
\]
and likewise $1/\cos^2 L = 1+O(\sigma^2/n^2)$. Thus for small $\sigma>0$,
\[
J_{0,n}(\sigma)= \begin{pmatrix} 4M + O( \sigma ) & O(\sqrt{\sigma}) & O(\sqrt{\sigma}) \\
O(\frac \sigma n)&  \pi n + O(\sqrt{\sigma})  & O(\frac \sigma n)\\
O\left(\frac \sigma n \right)& O\left(\frac \sigma n \right) & -\pi n + O(\sqrt{\sigma})
\end{pmatrix}
\]
when $\sigma\to 0$, we obtain
\[
|\det J_{0,n}(\sigma)| = 4\pi^2 n^2 \cdot M  +O(\sigma)  \gg  \sqrt{\sigma}
\]
because $M\approx \sqrt{\sigma}$.
Thus we found $|\det J_{0,n}(\sigma)| \gg \sqrt{\sigma}\neq 0$ so that by the analytic Implicit Function Theorem, $M,N,L$ are analytic in $\sigma$ near $\sigma=0$.

\section{Proofs of Lemma~\ref{new lemma 1.5} and  Lemma~\ref{the second part of Lemma 1.5. }}\label{sec: pfs of lemmas 3 and 4}

\subsection{The derivatives of $M_j$ at $\sigma=0$: Proof of Lemma~\ref{new lemma 1.5}}

\begin{proof}
We compute derivatives: From the definition of $\mu_j$ we obtain
\[
\mu_j' = \frac 1\pi \left( 2M_j'-M_i'-M_k' \right) .
\]
From \eqref{new coupled system} we obtain after one differentiation
 \begin{equation}\label{first derivative M_j}
\mu_j' \tan M_j +\mu_j (\tan M_j)' = \frac{3r}{\pi}
 \end{equation}
 and differentiating again
 \begin{equation}\label{second derivative M_j}
 \mu_j'' \tan M_j +2\mu_j' (\tan M_j)' + \mu_j (\tan M_j)'' = 0 .
  \end{equation}
We also recall that
 \begin{equation}\label{values M_j  at sigma=0}
\mu_j(0) = m_j, \quad M_j(0) = 0
 \end{equation}
so that $\tan M_j(0)=0$, $\cos M_j(0)=1$.  Now $(\tan M_j)' = M_j'/(\cos^2 M_j)$ and therefore
 \[
 (\tan M_j)' (0) = M_j'(0) .
 \]

 Substituting in \eqref{first derivative M_j} and evaluating at $\sigma=0$ using  \eqref{values M_j  at sigma=0} gives
 $ m_j M_j'(0) =  \frac{3r}{\pi}$,  or
 \[
 M_j'(0) =  \frac{3r}{\pi m_j} ,
 \]
 which is \eqref{Mj'}.
 We also obtain
 \[
  \begin{split}
 \mu_j'(0) &=  \frac 1\pi \left( 2M_j(0)'-M_i'(0)-M_k'(0) \right)= \frac{3r}{\pi^2}\left(\frac{2}{m_j}-\frac 1{m_i}-\frac 1{m_k} \right)
\\
& =
  \frac{3r}{\pi^2}  \frac{2m_i m_k-m_j m_i- m_j m_k}{m_j m_i m_k}
  =
   \frac{3r}{\pi^2}  \frac{m_j^2+2m_i m_k }{m_j m_i m_k}
\end{split}
 \]
 on using $m_i+m_k=-m_j$.

 The second derivative of $\tan M_j$ is
 \begin{equation}\label{2nd derivative of tan M}
 (\tan M_j)'' = \left(\frac{ M_j'}{\cos^2 M_j}\right)' = \frac{M_j''}{\cos^2 M_j} -\frac{2 (M_j')^2 \tan M_j}{\cos^2 M_j}
 \end{equation}
 and at $\sigma=0$ we obtain
 \[
  (\tan M_j)''(0) = M_j''(0) .
 \]
  Inserting in \eqref{second derivative M_j} yields
 \[
 2\mu_j'(0) M_j'(0) + m_j M_j''(0) = 0
 \]
 or
 \[
m_j  M_j''(0) = -2 \frac{3r}{\pi m_j} \cdot \frac{3r}{\pi^2}  \frac{m_j^2+2m_i m_k }{m_j m_i m_k}
= -\frac{18r^2}{\pi^3}  \frac{m_j^2+2m_i m_k}{ m_j^2 m_i m_k}
 \]
 which gives
 \[
 M_j''(0) = -\frac{18r^2}{\pi^3}  \frac{m_j^2+2m_i m_k}{ m_j^3 m_i m_k}
 \]
 as claimed in \eqref{Mj''}.
 \end{proof}

 \subsection{Bounding derivatives of $M_j$: Proof of Lemma~\ref{the second part of Lemma 1.5. }}
 %Now for \textcolor{red}{the second part of Lemma 1.5. } \marginpar{change to different lemma}

%\marginpar{\textcolor{red}{add proof of real analyticity of $M_j$}}

Analyticity of $M_j$ near $\sigma=0$ is proved analogously to the case $m=0$, $n\geq 1$ in Lemma~\ref{lem:exp LMN m=0}.

We will prove the bounds on the derivatives.
Before proceeding, we formulate a standard fact from linear algebra (we leave the verification to the reader):
%\marginpar{Removed proof of Lemma~\ref{linear alg lem}}
\begin{lem}\label{linear alg lem}
 Suppose we have a system of the form $(I+B)  x =   y$, $  x,  y\in \R^n$  with $B$ a rank one matrix
 \[
 B=\beta \cdot \alpha^T, \quad \beta, \alpha \in \R^n
 \]
 where $||\alpha|| \cdot ||\beta||<1 $. % so that $I+B$ is invertible.
 Then
 \[
 x = y - \frac{\langle \alpha , y\rangle}{1+\langle \alpha, \beta \rangle}\beta
 \]
 and so
\[
 x_j  = y_j + O( ||y|| \cdot ||\alpha|| \cdot  |\beta_j|) .
\]
\end{lem}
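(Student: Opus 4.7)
The plan is to exploit the rank-one structure of $B=\beta\alpha^{T}$ by collapsing the system $(I+B)x=y$ to a single scalar equation. First I would note that $Bx=(\langle\alpha,x\rangle)\beta$, so setting $c:=\langle\alpha,x\rangle$ the system becomes simply $x=y-c\beta$. Taking the inner product with $\alpha$ on both sides yields a scalar equation in $c$, namely $c(1+\langle\alpha,\beta\rangle)=\langle\alpha,y\rangle$; solving for $c$ and substituting back into $x=y-c\beta$ gives the closed form $x=y-\frac{\langle\alpha,y\rangle}{1+\langle\alpha,\beta\rangle}\beta$, which is the Sherman--Morrison identity applied to $I+\beta\alpha^{T}$.

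To justify the division and to obtain the componentwise bound, I would invoke Cauchy--Schwarz: $|\langle\alpha,\beta\rangle|\le\|\alpha\|\,\|\beta\|<1$ by hypothesis, so $1+\langle\alpha,\beta\rangle\ge 1-\|\alpha\|\,\|\beta\|>0$, confirming that the denominator does not vanish. The same inequality gives $|\langle\alpha,y\rangle|\le\|\alpha\|\,\|y\|$, and therefore $|c|\ll\|\alpha\|\,\|y\|$ with implied constant $1/(1-\|\alpha\|\,\|\beta\|)$. Reading off $x_{j}-y_{j}=-c\beta_{j}$ then yields the claimed bound $|x_{j}-y_{j}|=O(\|y\|\,\|\alpha\|\,|\beta_{j}|)$.

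No real obstacle is expected here: the content is purely formal rank-one linear algebra, and the only point of care is that the Sherman--Morrison denominator is bounded away from zero, which is precisely what the hypothesis $\|\alpha\|\,\|\beta\|<1$ guarantees. In the intended downstream application to the Jacobian of the secular system, $\|\alpha\|\,\|\beta\|$ will in fact be small with $\sigma$, so the implied constant is genuinely absolute.
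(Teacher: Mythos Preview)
Your argument is correct. The paper itself does not print a proof of this lemma---it explicitly leaves the verification to the reader---but the \LaTeX\ source contains a commented-out proof that proceeds differently: it expands $(I+B)^{-1}=\sum_{s\ge 0}(-B)^{s}$ as a Neumann series and computes $B^{s}y=\langle\alpha,y\rangle\langle\alpha,\beta\rangle^{s-1}\beta$ for $s\ge 1$, then sums the geometric series. Your direct substitution (set $c=\langle\alpha,x\rangle$, reduce to a scalar equation, solve) is the Sherman--Morrison route and is slightly more elementary, since it sidesteps any convergence issue; the hypothesis $\|\alpha\|\,\|\beta\|<1$ is used only to bound the denominator away from zero, whereas in the Neumann-series approach it is what guarantees convergence of $\sum(-B)^{s}$. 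Either way the output formula and the componentwise estimate are identical.
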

\begin{comment}
\begin{proof}
We use the formula for the inverse \marginpar{\textcolor{red}{justify convergence }}
\[
(I+B)^{-1} = \sum_{s=0}^\infty (-B)^s
\]
Now to compute $B^s y$:
\[
By = \beta \alpha^T y = \langle \alpha,y \rangle \beta, \quad B^2 y = B(\langle \alpha,y \rangle \beta) =\langle \alpha,y \rangle \langle \alpha, \beta \rangle \beta
\]
and for any $s\geq 1$
\[
B^s y = \langle \alpha,y \rangle (\langle \alpha, \beta \rangle)^{s-1} \beta
\]
Therefore
\[
\begin{split}
x&=(I+B)^{-1}y = y+\sum_{s\geq 1}(-1)^s B^s y
\\
&= y + \sum_{s\geq 1} (-1)^s  \langle \alpha,y \rangle (\langle \alpha, \beta \rangle)^{s-1} \beta
\\
& =y- \frac{\langle \alpha,y\rangle}{1+ \langle \alpha, \beta \rangle} \beta
\end{split}
\]
\end{proof}
\end{comment}

  We now proceed with the proof of Lemma~\ref{the second part of Lemma 1.5. }:

\begin{proof}

 For the first derivative we  use   \eqref{first derivative M_j} and rearrange it as %(use $|\mu_j| \gg m_1$)
 \[
 M_j' \left( 1+\frac {6r \sigma \cos^2 M_j}{\pi^2}\frac 1{\mu_j^2} \right) - \frac{3r \sigma \cos^2 M_j}{\pi^2 \mu_j^2}(M_i'+M_k')  =
 \frac{3r \cos^2 M_j}{\pi \mu_j}
 \]
   that is, the vector $\vec M = (M_1,M_2,M_3)^T$ satisfies a matrix equation of the form
 \[
 (I +B)\vec M' = y
 \]
 with
 \[
 B = \frac{3r\sigma}{\pi^2} \begin{pmatrix} \frac {2 \cos^2 M_1}{\mu_1^2} & - \frac {\cos^2 M_1}{\mu_1^2}&  - \frac {\cos^2 M_1}{\mu_1^2} \\
  - \frac {\cos^2 M_2}{\mu_2^2} &   \frac {2\cos^2 M_2}{\mu_2^2}& - \frac {\cos^2 M_2}{\mu_2^2} \\
  - \frac {\cos^2 M_3}{\mu_3^2} & - \frac {\cos^2 M_3}{\mu_3^2}&  \frac {2\cos^2 M_3}{\mu_3^2}
 \end{pmatrix}
 = \beta \cdot \alpha^T
 \]
 where
 \[
 \alpha^T = \frac{3r\sigma}{\pi^2}\left(2,-1,-1 \right), \quad \beta^T =  \left( \frac{\cos^2 M_1}{\mu_1^2}, \frac{\cos^2 M_2}{\mu_2^2},  \frac{\cos^2 M_3}{\mu_3^2} \right),
 \]
 and
 \[
 y_j =   \frac{3r \cos^2 M_j}{\pi \mu_j} .
 \]
  We use Lemma~\ref{linear alg lem}, noting that
  \[
  |\langle \alpha,y \rangle |\leq |\alpha | \cdot |y| \ll \frac 1{m_1}, \quad |\beta_j|\ll \frac 1{m_j^2}
  \]
  to find
  \[
  M'_j =  \frac{3r \cos^2 M_j}{\pi \mu_j} +O\left(  \frac 1{m_1 m_j^2} \right)  =  \frac{3r  }{\pi m_j}\left( 1+O\left(\frac 1{m_j} \right) \right)
  \]
   locally uniformly in $\sigma$.

 For the second derivative, use
 \begin{equation}\label{rewrite 2n derivative of tan M}
 \cos^2 M_j (\tan M_j)'' = M_j'' -2(M_j')^2 \frac{3r\sigma}{\pi \mu_j}
 \end{equation}
 (which is a rewriting of \eqref{2nd derivative of tan M} using \eqref{new coupled system})
  and  insert into \eqref{second derivative M_j} (again using \eqref{new coupled system}) to obtain (recalling $(\tan M_j)'\cos^2 M_j = M_j'$)
\[
\mu_j \left( M_j'' -\frac{6r\sigma}{\pi} \frac{(M_j')^2}{\mu_j}\right) = -2 \mu_j'M_j' - \mu_j'' \cos^2 M_j \tan M_j
\]
or, after using  \eqref{new coupled system}
\begin{equation}\label{eq for M''}
M_j'' +\frac {3r\sigma\cos^2 M_j }{ \pi^2  \mu_j^2} \left(2M_j''-M_i''-M_k'' \right)  = -\frac {2 \mu_j'M_j'}{\mu_j}
+ \frac{6r\sigma}{\pi} \frac{(M_j')^2}{\mu_j}  .
\end{equation}
The RHS of \eqref{eq for M''} is $O(1/(m_1 m_j^2))$ by our bounds on the first derivative
while the LHS of \eqref{eq for M''} is of the form    $(I+B) \vec M''$ with $B=\beta \alpha^T$,
\[
\alpha^T = \frac{3r\sigma}{\pi^2}(2,-1,-1), \quad
\beta^T =   \left( \frac{\cos^2 M_1  }{\mu_1^2 }, \frac{\cos^2 M_2  }{\mu_2^2}, \frac{\cos^2 M_3  }{\mu_3^2}\right) .
\]
 Applying  Lemma~\ref{linear alg lem} we find
 \[
 M_j'' = -\frac {2 \mu_j'M_j'}{\mu_j} + \frac{6r\sigma}{\pi} \frac{(M_j')^2}{\mu_j}
 + O\left(\frac 1{m_1^3}\cdot \frac 1{ m_j^2} \right) \ll \frac 1{m_1m_j^2}  .
 \]

  \end{proof}

%\subsubsection{Third derivative}

For the third derivative we have:
\begin{lem}
\begin{equation*}%\label{M_j'''}
M_j''' = 2(M_j')^3 + O\left(\frac 1{m_1^3m_j^2} \right)=  \frac{54 r^3}{\pi^3 m_j^3} + O\left( \frac 1{m_1^3m_j^2} \right) .
\end{equation*}
\end{lem}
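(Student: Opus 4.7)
The plan is to follow the same template that delivered the bounds on $M_j''$ in Lemma~\ref{the second part of Lemma 1.5. }, applying Leibniz's rule one order higher. Differentiating $\mu_j\tan M_j=3r\sigma/\pi$ three times in $\sigma$ produces
$$\mu_j^{(3)}\tan M_j+3\mu_j''(\tan M_j)'+3\mu_j'(\tan M_j)''+\mu_j(\tan M_j)'''=0,$$
and Fa\`a di Bruno expands (writing $g=\tan$, so $g'=\sec^2$, $g''=2\sec^2\tan$, $g'''=2\sec^2(1+3\tan^2)$)
$$(\tan M_j)'''=\sec^2 M_j\,M_j'''+6\sec^2 M_j\tan M_j\cdot M_j'M_j''+2\sec^2 M_j(1+3\tan^2 M_j)(M_j')^3.$$
Substituting $\mu_j^{(3)}=\frac{1}{\pi}(2M_j'''-M_i'''-M_k''')$ and $\tan M_j=3r\sigma/(\pi\mu_j)$ wherever possible, and dividing through by $\mu_j\sec^2 M_j$, I obtain a linear system
$$(I+B)\vec M^{(3)}=\vec y,$$
where $B=\beta\alpha^T$ is the \emph{same} rank-one matrix already used in equation~\eqref{eq for M''}, with $\alpha^T=\frac{3r\sigma}{\pi^2}(2,-1,-1)$ and $\beta^T=\left(\frac{\cos^2 M_l}{\mu_l^2}\right)_{l=1,2,3}$.

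The dominant contribution to $y_j$ comes from $\mu_j g'''(M_j)(M_j')^3$, which after dividing by $\mu_j$ and using the $\cos^2 M_j$ factor becomes $(2+6\tan^2 M_j)(M_j')^3=2(M_j')^3+O(\sigma^2/m_j^5)$, producing the claimed main term $2(M_j')^3=54r^3/(\pi^3 m_j^3)$. The three remaining contributions to $\vec y$ --- coming from $3\mu_j''(\tan M_j)'$, $3\mu_j'(\tan M_j)''$, and $3\mu_j g''(M_j) M_j' M_j''$ --- must each be shown to be $O(1/(m_1^3 m_j^2))$ after division by $\mu_j\asymp m_j$. For this I combine the pointwise estimates from Lemma~\ref{the second part of Lemma 1.5. } ($|M_j'|\asymp 1/m_j$, $|M_j''|\ll 1/(m_1 m_j^2)$, $|\mu_j'|\ll 1/m_1$) with the derived estimate
$$|\mu_j''|\ll \frac{1}{m_1}\left(\frac{1}{m_j^2}+\frac{1}{m_i^2}+\frac{1}{m_k^2}\right)\ll \frac{1}{m_1^3},$$
obtained by summing $|M_l''|\ll 1/(m_1 m_l^2)\le 1/m_1^3$ over $l\in\{1,2,3\}$. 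The potentially borderline candidate $3\mu_j'(\tan M_j)''/\mu_j = O(1/(m_1^2 m_j^3))$ is absorbed into the target envelope via the hierarchy $m_1\le |m_l|$, which yields $1/(m_1^2 m_j^3)\le 1/(m_1^3 m_j^2)$. Finally, Lemma~\ref{linear alg lem} converts the rank-one system into $M_j^{(3)}=y_j+O(\|y\|\cdot\|\alpha\|\cdot|\beta_j|)$, and the inversion error of size $O(\sigma\|y\|/m_j^2)$ is also absorbed into $O(1/(m_1^3 m_j^2))$ for $\sigma$ small.

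The main technical obstacle is the disciplined bookkeeping of these many cross-terms --- especially establishing the derived bound $|\mu_j''|\ll 1/m_1^3$ and verifying that every remaining piece of $\vec y$ respects the target envelope $O(1/(m_1^3 m_j^2))$ after the matrix inversion. Analyticity of $M_j$ in $\sigma$ near the origin (needed to give $M_j^{(3)}$ classical meaning) is not itself an obstacle: it follows from the same analytic implicit-function argument used in Lemma~\ref{lem:exp LMN m=0}, together with the nonvanishing of the Jacobian at $\sigma=0$, which is immediate for $m_1\ge 1$ by a direct computation analogous to the one carried out there.
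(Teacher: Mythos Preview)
Your approach coincides with the paper's: differentiate $\mu_j\tan M_j=3r\sigma/\pi$ three times, isolate $M_j'''$ to obtain a system $(I+B)\vec M'''=\vec y$ with the same rank-one perturbation $B$ as in the treatment of $M_j''$, and invert via Lemma~\ref{linear alg lem}. The only cosmetic difference is that you expand $(\tan M_j)'''$ directly by Fa\`a di Bruno whereas the paper differentiates its identity for $\cos^2 M_j(\tan M_j)''$; the identification of the main term and the bookkeeping of the cross-terms (including your bound $|\mu_j''|\ll 1/m_1^3$ and the reduction $1/(m_1^2m_j^3)\le 1/(m_1^3m_j^2)$ via $m_1\le |m_j|$) mirror the paper's.
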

\begin{proof}
Differentiate \eqref{second derivative M_j} to obtain
\begin{equation}\label{third derivative M_j}
\mu_j (\tan M_j)'''  + 3\mu_j'(\tan M_j)'' +3\mu_j'' (\tan M_j)' +\mu_j''' \tan M_j=0 .
\end{equation}
Recall \eqref{2nd derivative of tan M} which gives
\begin{equation}\label{2nd derivative of tan M times cos^2}
\cos^2 M_j (\tan M_j)'' = M_j'' -2(M_j')^2 \tan M_j
\end{equation}
which in particular we now know to be $O(1/m_1 m_j^2)$ using \eqref{new coupled system}.
Differentiating \eqref{2nd derivative of tan M times cos^2} gives
\begin{multline*}
-2\tan M_j(\cos^2 M_j) M_j' (\tan M_j)'' + \cos^2 M_j (\tan M_j)'''  \\
= M_j''' -4M_j' M_j'' \tan M_j
-  2 (M_j')^2 (\tan M_j)'
\end{multline*}
so that by \eqref{second deriv of M}
\begin{equation}\label{new expression for M_j'''}
\cos^2 M_j (\tan M_j)'''  = M_j'''  -2(M_j')^3
+O\left( \frac 1{m_1 m_j^4}  \right)
\end{equation}
on using \eqref{new coupled system} and
\[
(\tan M_j)' = \frac{M_j'}{\cos^2 M_j} =M_j' \left(1+\left(\tan M_j \right)^2 \right) = M_j'  +O\left(\frac 1{m_j^3} \right) .
\]

Multiplying \eqref{third derivative M_j} by $(\cos^2 M_j)/\mu_j$ and inserting \eqref{new expression for M_j'''} gives
\[
 M_j'''
 + \mu_j''' \tan M_j \frac{\cos^2 M_j}{\mu_j}
= 2(M_j')^3 +O\left(\frac 1{m_1m_j^4} \right)
 \]
and thus we get
\[
M_j''' + O\left(\frac 1{m_j^2}\right) \left(2M_j'''-M_i'''-M_k''' \right) =   2(M_j')^3 +O\left(\frac 1{m_1m_j^4} \right) .
\]
Applying Lemma~\ref{linear alg lem} with $\alpha^T = (2,-1,-1)$, $\beta_j = O(1/m_j^2)$  and $y_j = 2(M_j')^3 + O(\frac 1{m_1m_j^4})$  (so that $\langle \alpha ,y \rangle =  O(1/m_1^3)$) gives
\[
M_j''' = 2(M_j')^3 + O\left(\frac 1{m_1^3m_j^2} \right) .
\]
 %\marginpar{for $j\neq 1$ the remainder may dominate the main term !}
  We then use \eqref{first deriv of M}
 \[
 M_j'  = \frac{3r}{\pi m_j} \left( 1+ O\left(\frac \sigma{m_j} \right) \right)
 \]
  to obtain
 \[
 M_j''' = \frac{54 r^3}{\pi^3 m_j^3} + O\left(  \frac 1{m_1^3m_j^2} \right) .
 \]
 \end{proof}

 \section{Proof of Lemma~\ref{lem:2term exp m,n>=1} }\label{sec: pf of lemma5}
Recall that
\[
\Lambda_{m,n} = \frac{2\pi^2}{27 r^2}\sum_{j=1}^3 \mu_j^2
\]
and we set
\[
R^2 = m^2+mn+n^2   = \frac 12\sum_{j=1}^3 m_j^2 ,
\]
\[
F_R(m,n) = \frac{R^4}{m^2n^2(m+n)^2} = \frac 1{m^2}+\frac 1{n^2}+\frac 1{(m+n)^2}.
\]
We want to show
\begin{lem} %\label{Lemma 1.6}
Assume that $1\leq m\leq n$. Then
\[
\Lambda_{m,n}'(0) = \frac 4r , \quad
\Lambda_{m,n}''(0) = - 8F_R(m,n) %=\frac{8R^4}{\pi^2 m^2n^2(m+n)^2} =  -\frac{8R^4}{\pi^2 m_1^2m_2^2 m_3^2}
\]
\[
\Lambda_{m,n}^{(3)}(\sigma) =24 \cdot r \cdot  F_R(m,n)+O \left(\frac 1{m^4} \right)
%  \frac{24 R^4r}{\pi^2 m_1^2m_2^2 m_3^2}  +O(\frac 1{m_1^4})
\]
\end{lem}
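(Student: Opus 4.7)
The plan is to express $\Lambda_{m,n}(\sigma) = \tfrac{2\pi^{2}}{27r^{2}}\sum_{j=1}^{3}\mu_{j}(\sigma)^{2}$ and differentiate, reducing each computation to a sum of derivatives of the auxiliary variables $M_{j}(\sigma)$, for which I have the explicit evaluations of Lemma~\ref{new lemma 1.5} and the uniform estimates of Lemma~\ref{the second part of Lemma 1.5. }. Since $\mu_{j}^{(k)} = \tfrac{1}{\pi}(2M_{j}^{(k)}-M_{i}^{(k)}-M_{k}^{(k)})$ for $k\ge1$, the workhorse identity, valid whenever $\sum_{j}m_{j}=0$, is
\[
\sum_{j}m_{j}(2a_{j}-a_{i}-a_{k}) \;=\; 3\sum_{j}m_{j}a_{j},
\]
a direct consequence of $m_{i}+m_{k}=-m_{j}$; it gives $\sum m_{j}\mu_{j}^{(k)} = \tfrac{3}{\pi}\sum m_{j}M_{j}^{(k)}$ for every $k\ge1$. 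Alongside it I use the elementary identities $\sum m_{j}^{2}=2R^{2}$, $\sum m_{j}^{4}=2R^{4}$ (the latter via $(\sum m_{j}^{2})^{2}=\sum m_{j}^{4}+2\sum_{j<k}m_{j}^{2}m_{k}^{2}$ combined with the pleasant factorization $\sum_{j<k}m_{j}^{2}m_{k}^{2}=R^{4}$), $\sum 1/m_{j}^{2}=F_{R}(m,n)$, and $\sum 1/m_{j}=-R^{2}/P$, where $P:=m_{1}m_{2}m_{3}=-mn(m+n)$.

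For $\Lambda'_{m,n}(0)=\tfrac{4\pi^{2}}{27r^{2}}\sum m_{j}\mu_{j}'(0)$, applying the workhorse identity with $a_{j}=M_{j}'(0)=3r/(\pi m_{j})$ collapses the inner sum to $\tfrac{9r}{\pi}$, yielding $4/r$. For $\Lambda''_{m,n}(0)=\tfrac{4\pi^{2}}{27r^{2}}\bigl[\sum(\mu_{j}'(0))^{2}+\sum m_{j}\mu_{j}''(0)\bigr]$, the values in Lemma~\ref{new lemma 1.5} show that both $\mu_{j}'(0)$ and $M_{j}''(0)$ reduce to expressions involving the single polynomial $3m_{j}^{2}-2R^{2}=m_{j}^{2}+2m_{i}m_{k}$: explicitly $\mu_{j}'(0) = \tfrac{3r}{\pi^{2}}\cdot\tfrac{3m_{j}^{2}-2R^{2}}{P}$ and $M_{j}''(0) = -\tfrac{18r^{2}}{\pi^{3}}\cdot\tfrac{3m_{j}^{2}-2R^{2}}{m_{j}^{2}P}$. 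The squared-derivative sum then collapses via $\sum_{j}(3m_{j}^{2}-2R^{2})^{2}=6R^{4}$ (expanding and using the arithmetic identities above) to give $\tfrac{54r^{2}}{\pi^{4}}F_{R}$; the second sum, via the workhorse identity and $\sum 1/m_{j}=-R^{2}/P$, yields $-\tfrac{108r^{2}}{\pi^{4}}F_{R}$. Adding and rescaling by $\tfrac{4\pi^{2}}{27r^{2}}$ produces $-\tfrac{8}{\pi^{2}}F_{R}(m,n)$.

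For the third derivative, starting from $\Lambda''' = \tfrac{4\pi^{2}}{27r^{2}}\bigl[3\sum\mu_{j}'\mu_{j}''+\sum\mu_{j}\mu_{j}'''\bigr]$, I expect the leading contribution to come from $\sum m_{j}\mu_{j}'''$, with the remainder absorbed into $O(1/m^{4})$. The uniform bounds of Lemma~\ref{the second part of Lemma 1.5. } yield $|\mu_{j}'|=O(1/m)$, $|\mu_{j}''|=O(1/m^{3})$ and $|\mu_{j}'''|=O(1/m^{3})$, and $|\mu_{j}-m_{j}|=O(\sigma/m)$, giving $|3\sum\mu_{j}'\mu_{j}''|=O(1/m^{4})$ and $|\sum(\mu_{j}-m_{j})\mu_{j}'''|=O(\sigma/m^{4})=O(1/m^{4})$ uniformly on $(0,\sigma_{0})$. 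Then the workhorse identity reduces $\sum m_{j}\mu_{j}'''$ to $\tfrac{3}{\pi}\sum m_{j}M_{j}'''$, and substituting $M_{j}'''=\tfrac{54r^{3}}{\pi^{3}m_{j}^{3}}+O\bigl(\tfrac{1}{m_{1}^{3}m_{j}^{2}}\bigr)$ from Lemma~\ref{the second part of Lemma 1.5. }, together with the observation that $m_{j}/m_{j}^{3}=1/m_{j}^{2}>0$ regardless of the sign of $m_{j}$, the sum collapses to $\tfrac{54r^{3}}{\pi^{3}}F_{R}(m,n)+O(1/m^{4})$; assembling gives $\Lambda_{m,n}^{(3)}(\sigma) = \tfrac{24r}{\pi^{2}}F_{R}(m,n)+O(1/m^{4})$.

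The main obstacle is carrying out the cancellations in the second derivative cleanly: both of its constituent sums individually produce multiples of $F_{R}$, and they combine to the stated compact form only because of the slightly surprising identity $\sum_{j<k}m_{j}^{2}m_{k}^{2}=R^{4}$ (equivalently $\sum(3m_{j}^{2}-2R^{2})^{2}=6R^{4}$), which is what forces the $F_{R}$-form of the answer. Once this and the workhorse identity are in hand, the whole argument becomes a systematic reduction of the $\mu_{j}$-derivatives to $M_{j}$-derivatives.
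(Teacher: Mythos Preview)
Your proposal is correct and essentially identical to the paper's proof: both differentiate $\Lambda_{m,n}=\tfrac{2\pi^2}{27r^2}\sum\mu_j^2$, reduce the $\mu_j$-derivatives to $M_j$-derivatives via the same identity (stated in the paper as Lemma~\ref{lem: summing 3 indices}), and insert the values and bounds from Lemmas~\ref{new lemma 1.5}--\ref{the second part of Lemma 1.5. }; your packaging via $m_j^2+2m_im_k=3m_j^2-2R^2$, $\sum m_j^4=2R^4$, and $\sum_{j<k}m_j^2m_k^2=R^4$ is a bit tidier than the paper's ``a straightforward computation reveals''. Note that your outputs $-\tfrac{8}{\pi^2}F_R$ and $\tfrac{24r}{\pi^2}F_R$ agree with Lemma~\ref{lem:2term exp m,n>=1} and with the paper's own proof in \S\ref{sec: pf of lemma5}; the statement you were given omits the $1/\pi^2$ factors, which is a typo in the paper's restatement.
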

%\textcolor{red}{ A Mathematica computation reveals that
%\begin{equation*}%\label{mathematica 2}
%\sum_{j=1}^3 \frac 1{m_j^2} = \frac{R^4}{(m_1m_2m_3)^2}
%\end{equation*}
%Which version is better for us?}

\subsection{The first derivative of $\Lambda_{m,n}$}
Differentiating gives
 \[
 \Lambda_{m,n}'(0) =\frac{4\pi^2}{27 r^2}\sum_{j=1}\mu_j(0)\mu_j'(0) .
 \]
 We have
 \[
 \begin{split}
 \sum_{j=1}\mu_j(0)\mu_j'(0) &=\frac 1\pi \sum_{j=1}^3 m_j\left(2M_j'(0)-M_i'(0)-M_k'(0) \right)
 \\
 &= \frac{3r}{\pi^2} \sum_{j=1}^3 m_j  \left(\frac 2{m_j}-\frac 1{m_i}-\frac 1{m_k} \right)
 \end{split}
 \]
since $\mu_j(0) = m_j$, and $M_j'(0) = 3r/(\pi m_j)$. Since
\[
\sum_{j=1}^3 m_j  \left(\frac 2{m_j}-\frac 1{m_i}-\frac 1{m_k} \right)  = 6- \sum_{j=1}^3 m_j \left(\frac 1{m_i}+ \frac 1{m_k} \right) ,
\]
and
\begin{multline*}
- \sum_{j=1}^3 m_j \left(\frac 1{m_i}+ \frac 1{m_k} \right)  = -\sum_{j=1}^3 \frac{m_j}{m_i} -\sum_{j=1}^3 \frac{m_j}{m_k}
\\ =- \sum_{k=1}^3 \frac 1{m_k} \left( m_j+m_i \right) =\sum_{k=1}^3 1=3  ,
% \sum_{j=1}^3 \frac{m_j^2}{m_i m_k} = -\frac{m^2}{n(m+n)}-\frac{n^2}{m(m+n)}+\frac{(m+n)^2}{mn} = 3
\end{multline*}
we obtain
 \[
 \sum_{j=1}\mu_j(0)\mu_j'(0)=   \frac{27r}{\pi^2 }
 \]
 which gives
 \[
 \Lambda_{m,n}'(0)= \frac{4\pi^2}{27 r^2}\cdot  \frac{27r}{\pi^2}  = \frac{4}{r} .
 \]

 \subsection{The second derivative of $\Lambda_{m,n}$}
Using Leibnitz's rule gives
\[
\Lambda_{m,n}'' = \frac{4\pi^2}{27 r^2} \sum_{j=1}^3 \mu_j\mu_j''+(\mu_j')^2 .
\]
We have
\[
\mu_j(0) = m_j, \quad M_j'(0) =\frac{3r}{\pi m_j}, \quad
M_j''(0) %=  -\frac{18r^2}{\pi^3} \frac{m_j^2+2m_i m_k}{m_j^3m_im_k}
 =-\frac{18r^2}{\pi^3} \left(  \frac 1{m_im_jm_k}+\frac{2}{m_j^3}\right)
\]
so that
\[
\mu_j'(0) =\frac 1\pi(2M_j'(0)-M_i'(0)-M_k'(0)) = \frac{3r}{\pi^2} \left(\frac 2{m_j}-\frac 1{m_i}-\frac 1{m_k} \right)
\]
and
\begin{multline*}
 \mu_j(0)\mu_j''(0)  = m_j\frac 1\pi (2M_j''(0)-M_i''(0)-M_k''(0))
 \\
=  -\frac{18r^2}{\pi^4}
 \left( 2 \left(\frac {1}{m_i m_j}+\frac{2}{m_j^2} \right)
  -\left(\frac {1}{m_jm_k}+\frac{2m_j}{m_i^3} \right) -\left(\frac {1}{m_jm_i}+\frac{2m_j}{m_k^3}\right) \right)
\end{multline*}
giving
 \[
 \sum_{j=1}^3\mu_j(0)\mu_j''(0) =  -\frac{36 r^2}{\pi^4} \sum_{j=1}^3 \left( \frac{2}{m_j^2} -\frac{ m_j}{m_i^3}-\frac{ m_j}{m_k^3} \right)  .
 \]
Likewise,
\[
\sum_{j=1}^3 \mu_j'(0)^2 = \frac{9r^2}{\pi^4}\sum_{j=1}^3 (\frac 2{m_j}-\frac 1{m_i}-\frac 1{m_k})^2 .
\]

A straightforward computation reveals that
\begin{equation*}%\label{mathematica 1}
 \sum_{j=1}^3 \left( \frac{2}{m_j^2} -\frac{ m_j}{m_i^3}-\frac{ m_j}{m_k^3} \right)   = \frac{3 R^4}{(m_1m_2m_3)^2}=3F_R(m,n)
\end{equation*}
and
\[
\sum_{j=1}^3 \left(\frac 2{m_j}-\frac 1{m_i}-\frac 1{m_k}\right)^2=  \frac{6R^4}{(m_1m_2m_3)^2} =6F_R(m,n).
\]
These give
\[
\frac{4\pi^2}{27 r^2} \sum_{j=1}^3 \mu_j(0)\mu_j''(0) = - \frac{16}{\pi^2}F_R(m,n)
, \;
\frac{4\pi^2}{27 r^2}\sum_{j=1}^3 \mu_j'(0)^2  = \frac{8}{\pi^2}F_R(m,n) .
\]
Altogether we find
\[
\Lambda_{m,n}'' (0) =  - \frac{8}{\pi^2}F_R(m,n) .
\]

\subsection{The third derivative}
We have
\[
\Lambda_{m,n}^{(3)} = \frac{2\pi^2}{27 r^2} \sum_{j=1}^3 2\mu_j \mu_j'''   +6\mu_j'\mu_j''  .
\]
We use $\mu_j'\ll 1/m_1$, $\mu_j''\ll 1/m_1^3$ to deduce that
\[
\Lambda_{m,n}^{(3)} = \frac{4\pi^2}{27 r^2} \sum_{j=1}^3  \mu_j \mu_j'''  +O(\frac 1{m_1^4}) .
\]
Now
\[
 \sum_{j=1}^3  \mu_j \mu_j'''   = \frac 1\pi \sum_{j=1}^3 \mu_j(2M_j'''-M_i'''-M_k''')  =
 \frac 1\pi \sum_{j=1}^3 M_j'''(2\mu_j-\mu_i-\mu_k )
\]
after reordering the sum. We  use a simple lemma:
\begin{lem}\label{lem: summing 3 indices}
If $\{i,j,k\} = \{1,2,3\}$, and $b_1+b_2+b_3=0$, then
\[
\sum_{j=1}^3 a_j(2b_j-b_i-b_k) = 3\sum_{j=1}^3a_j b_j .
\]
\end{lem}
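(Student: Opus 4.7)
The plan is to reduce the identity to a one-line substitution using the constraint $b_1+b_2+b_3=0$. First I would observe that for each fixed $j\in\{1,2,3\}$, the indices $i,k$ are precisely the two elements of $\{1,2,3\}\setminus\{j\}$, so that
\[
b_i+b_k = (b_1+b_2+b_3)-b_j = -b_j.
\]
Substituting this into the expression in the parentheses yields $2b_j-b_i-b_k = 2b_j-(-b_j) = 3b_j$ for every $j$. Multiplying by $a_j$ and summing over $j$ then gives $\sum_{j=1}^3 a_j(2b_j-b_i-b_k) = 3\sum_{j=1}^3 a_j b_j$, which is the claim.

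There is no real obstacle: the content is purely the observation that the linear map $(b_1,b_2,b_3)\mapsto (2b_j-b_i-b_k)_{j=1,2,3}$ acts as multiplication by $3$ on the hyperplane $\{b_1+b_2+b_3=0\}$. Equivalently, one could argue by noting that in matrix form the coefficient matrix of $(2b_j-b_i-b_k)$ against $(b_1,b_2,b_3)$ is $3I - J$, where $J$ is the all-ones matrix, and that $J$ annihilates the hyperplane $b_1+b_2+b_3=0$, so $3I-J$ acts there as $3I$.
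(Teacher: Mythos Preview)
Your proof is correct. The paper does not supply a proof at the point where Lemma~\ref{lem: summing 3 indices} is stated; the identity is, however, essentially verified earlier in \S\ref{sec: upper bound} (in the proof of Theorem~\ref{thm: diffs for triangle}) by the equivalent manipulation $\sum_j a_j(2b_j-b_i-b_k)=2\sum_j a_j b_j-\sum_j a_j(b_i+b_k)$ together with the reindexing $\sum_j a_j(b_i+b_k)=\sum_j b_j(a_i+a_k)=-\sum_j a_j b_j$, which is just a rearrangement of your direct substitution $b_i+b_k=-b_j$.
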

Apply Lemma~\ref{lem: summing 3 indices} to $a_j=M_j'''$, $b_j=\mu_j$, to obtain
\[
 \sum_{j=1}^3  \mu_j \mu_j'''   =
\frac 3\pi  \sum_{j=1}^3  \mu_j M_j'''
 \]
 Using $\mu_j = m_j +O( 1/m_j)$ and \eqref{M_j'''} which states that
\[
M_j''' = \frac{54 r^3}{\pi^3 m_j^3} +O\left(\frac 1{m_1^3m_j^2} \right)
\]
gives
\[
\mu_jM_j'''  = \frac{54 r^3}{\pi^3 m_j^2} +O\left(\frac 1{m_1^3m_j } \right)
=\frac{54 r^3}{\pi^3 m_j^2} +O\left(\frac 1{m_1^4 } \right) .
\]
We obtain
\[
 \sum_{j=1}^3  \mu_j \mu_j'''   =
\frac 3\pi  \sum_{j=1}^3 \frac{54 r^3}{\pi^3 m_j^2} +O\left(\frac 1{m_1^4 } \right)
\]
and so
\begin{multline*}
\Lambda_{m,n}^{(3)} = \frac{4\pi^2}{27 r^2} \sum_{j=1}^3  \mu_j \mu_j'''  +O\left(\frac 1{m_1^4} \right)
= \frac{4\pi^2}{27 r^2} \frac 3\pi  \sum_{j=1}^3 \frac{54 r^3}{\pi^3 m_j^2} +O\left(\frac 1{m_1^4 } \right)
\\
= \frac{24 r}{\pi^2} \sum_{j=1}^3 \frac 1{m_j^2} +O\left(\frac 1{m_1^4 } \right) = 24 rF_R(m,n) +O\left(\frac 1{m_1^4 } \right) .
\end{multline*}
This concludes the proof of Lemma~\ref{lem:2term exp m,n>=1} . \qed

 \section{Proof of Proposition~\ref{prop:2nd term asymp} }
%We set
%\[
%R^2:=m^2+mn+n^2
%\]
%and
%\[
%F_R(m,n) = \frac{R^4}{m^2n^2(m+n)^2}  = \frac 1{m^2}+\frac 1{n^2}+\frac 1{(m+n)^2}
%\]
%\begin{prop}\textcolor{red}{[Proposition 1.3]}
%If $(m,n)$ and $(m',n')$ are two integer points on the ellipse $X^2+XY+Y^2=R^2$ with $1\leq m<m'\leq n'<n$, then $F_R(m,n)>F_R(m',n')$ and as $R\to \infty$ we have a lower bound for the difference
%\[
%F_R(m,n)-F_R(m',n') >  \frac {59}{R^4}+O(\frac 1{R^5}) %\gg \frac 1{R^4}  .
%\]
%\end{prop}
\begin{proof}

We first assume that $$m'>10m.$$
%\boxed{\textcolor{red}{complete}}
Then use
\[
F_R(m,n) = \frac 1{m^2}+\frac 1{n^2}+\frac 1{(m+n)^2} >\frac 1{m^2},
\]
\[
 F_R(m',n') = \frac 1{m'^2}+\frac 1{n'^2}+\frac 1{(m'+n')^2}<\frac 3{m'^2} < \frac 3{(10 m)^2}
\]
to obtain
\[
F_R(m,n)-F_R(m',n')% > \frac 1{m^2}-\frac 3{m'^2}
>\frac 1{m^2}-\frac 3{(10 m)^2} \gg \frac 1{m^2}
\]
which is certainly sufficient.

%Next, assume $m<m'\leq 10m$ but $m\leq \delta R$, so that in particular, $m'<10\delta R$:  \fbox{\textcolor{red}{Fix this}}

Now assume that for $\delta>0$ very small (but fixed),
\[
  \delta R< m<m'\leq 10 m .
\]
We will show that
 \begin{multline*}
 F_R(m,n)-F_R(m',n') =  \frac 1{R^2 }\left(f\left(\frac mR\right)-f\left(\frac{m'}{R}\right)\right)
 \\
 >\frac{243}{4} \frac 1{R^4}
 +O\left(\frac 1{R^5}\right) \gg \frac 1{R^4}
 \end{multline*}
 and since $m>\delta R$, we obtain
 \[
 F_R(m,n)-F_R(m',n')\gg \frac{\delta^4}{m^4} \gg \frac 1{m^4}
 \]
as required.

Given $R$,  and $1\leq m< n$, with $m^2+mn+n^2=R^2$ we can express $n$ in terms of $m$ as
\[
n=\sqrt{R^2 - 3\left(\frac m2\right)^2}-\frac m2 .
\]
Therefore we can  write
\[
F_R(m,n) = \frac 1{R^2}f\left(\frac mR\right)
\]
where
%\marginpar{$f=1/g$ in Igor's notation}
\[
f(x) = \frac 1{x^2}+ \frac 1{ (\sqrt{1-3(\frac x2)^2}-\frac x2)^2} + \frac 1{ (\sqrt{1-3(\frac x2)^2}+\frac x2)^2}
%= \frac 1{x^2}+ \frac{2-x^2}{(1-x^2)^2} = \frac 1{x^2(1-x^2)^2}
\]
which simplifies to
\[
f(x) = \frac 1{x^2(1-x^2)^2} .
\]
%Indeed,
%\[
%f(x) = \frac 1{x^2}+ \frac 1{ (\sqrt{1-3(\frac x2)^2}-\frac x2)^2} + \frac 1{ (\sqrt{1-3(\frac x2)^2}+\frac x2)^2}
%= \frac 1{x^2}+ \frac{2-x^2}{(1-x^2)^2} = \frac 1{x^2(1-x^2)^2}
%\]
 The derivative of $f$ is
 \[
 f'(t) = -\frac{2(1-3t^2)}{t^3(1-t^2)^3}
 \]
 which is negative for $0<t<1/\sqrt{3}$, so that $f$ is decreasing in that range. Moreover, the second derivative is
 \[
 f''(t) = \frac{6 \left(7 t^4-4 t^2+1\right)}{t^4 \left(t^2-1\right)^4}=\frac{6 \left(3t^4+(1-2t^2)^2\right)}{t^4 \left(t^2-1\right)^4}>0
 \]
 which is positive, hence $f'(t)$ is increasing (and negative) and $-f'(t)$ is positive and decreasing.

 Let $x=m/R$, $x'=m'/R$.  Then
 \[
 F_R(m,n)-F_R(m',n') = \frac 1{R^2}\left(f(x)-f(x') \right)
 \]
 We separate two cases: $m'=n'$,  or $m'<n'$.

 If $m'=n'$ then $x'=1/\sqrt{3}$ (since then $R^2 = 3(m')^2$), and $f'(1/\sqrt{3})=0$. We expand $f(x)$ around $x'=1/\sqrt{3}$ to first order with  Lagrange remainder term
\[
f\left(x\right)-f\left(\frac 1{\sqrt{3}}\right) = f'\left(\frac 1{\sqrt{3}} \right)\left(x-\frac 1{\sqrt{3}} \right) +\frac{f''(t)}{2}\left(x-\frac 1{\sqrt{3}}\right)^2
\]
 for some $t\in (x,1/\sqrt{3})\subset[0,\frac 1{\sqrt{3}}]$. Hence using $\frac 1{\sqrt{3}}-x=\frac{m'-m}{R}\geq \frac 1R$ and a numerical finding that $ \min_{[0,1/\sqrt{3}]}f''(t)=119.167\dots $, we obtain
 \begin{equation}\label{case m'=n'}
 f(x)-f\left(\frac 1{\sqrt{3}} \right) \geq \frac 12 \left( \min_{[0,1/\sqrt{3}]}  f''(t)\right) \left(x-\frac 1{\sqrt{3}} \right)^2
 %\geq   \frac {  \min_{[0,1/\sqrt{3}]}   f''(t) }{2R^2}
  >\frac {59}{R^2} .
 \end{equation}

 If $m'<n'$  then we use  the mean value theorem, obtaining that for some $ x<t<x'$ we have
 \[
 F_R(m,n)-F_R(m',n') = \frac 1{R^2}\left(f\left(x\right)-f\left(x' \right) \right) %= \frac 1{R^2} (x-x')f'(t)
 =  \frac 1{R^2}(x'-x)\cdot (-f'(t))
 \]
We want to give lower bounds for $x'-x$ and for $-f'(t)$.

We have $x'-x=(m'-m)/R\geq 1/R$. Moreover, we claim that
\[
x'\leq \frac 1{\sqrt{3}}-\frac 1{2R}.
\]
Indeed, since $m'^2+m'n'+(n')^2=R^2$  with $n'>m'$ so that by integrality $n'\geq m'+1$, we have
\[
4R^2 = 3(m')^2+(m'+2n')^2\geq 3(m')^2+(m'+2(m'+1))^2 = 4 (1 + 3 m' + 3 (m')^2)
\]
so that $x'=m'/R $ satisfies $3(x')^2+\frac 3R x' +\frac 1{R^2}\leq 1$, giving
\[
%\frac 1R\leq
x'\leq \frac 16\left(\sqrt{12-\frac 3{R^2}}-\frac 3R\right)<  \frac 1{\sqrt{3}}-\frac 1{2R} .
\]

Hence
 \[
 -f'(t)>-f'(x')>-f'(\frac 1{\sqrt{3}}-\frac 1{2R})=\frac{243}{4R}+O(\frac 1{R^2}) .%\gg \frac 1R
 \]
 %and   since $x'-x=(m'-m)/R\geq 1/R$ we obtain
 %\[
 %f(x)-f(x') >(x'-x)\cdot (-f'(\frac 1{\sqrt{3}}-\frac 1{2R})) .
% \]
 %Now $x<1/\sqrt{3}-1/(2R)$ \textcolor{red}{ (explain why here?)} so that
 %\[
 %-f'(x) >-f'(\frac 1{\sqrt{3}}-\frac 1{2R}) =\frac{243}{4R}+O(\frac 1{R^2}) \gg \frac 1R
 %\]
 %and therefore since $x'-x=(m'-m)/R\geq 1/R$ we obtain
 %\fbox{Wrong - need $=f'(t)>-f'(x')$} and then we don't know that $m'\neq n'$ !
  Therefore
 \begin{equation}\label{case m'<n'}
 f(x)-f(x') =(x'-x) \cdot(-f'(t))\geq \frac{243}{4} \frac 1{R^2}+O(\frac 1{R^3}) %\gg \frac 1{R^2}
 \end{equation}
 in this case.

 Combining \eqref{case m'=n'} and \eqref{case m'<n'} gives that in both cases,
 \[
 F_R(m,n)-F_R(m',n') =  \frac 1{R^2 }\left(f\left(\frac mR\right)-f\left(\frac{m'}{R}\right)\right)>  \frac {59}{R^4}+O(\frac 1{R^5}) %\gg \frac 1{R^4}
 \]
 as claimed.

Finally assume
\[
m<m'\leq 10m \quad \mbox{and} \quad  m\leq \delta R,
\]
so that in particular, $m'<10\delta R$:
Then
  \[
 F_R(m,n)-F_R(m',n') = \frac 1{R^2}(f(x)-f(x')) = \frac 1{R^2} (x'-x) \cdot (-f'(t))
 \]
 for some $t\in (x,x')$. Note that  $0<t<x'<10x<10\delta$ so that
 \[
 -f'(t) =\frac{2(1-3t^2)}{t^3(1-t^2)^3}  >\frac{2(1-(10 \delta)^2)}{t^3}>\frac 1{t^3}
 \]
 for $\delta>0$ sufficiently small,  so since $tR<m'<10m$ we obtain
 \[
 F_R(m,n)-F_R(m',n') =\frac{m'-m}{R^3} (-f'(t))>\frac{m'-m}{t^3R^3}  > \frac 1{(10m)^3}
 \]
which is consistent with the assertion of Proposition \ref{prop:2nd term asymp} in this case.
\end{proof}

\end{document}